
\documentclass[12pt,a4paper]{article}


\usepackage[utf8]{inputenc}
\usepackage[T1]{fontenc}
\usepackage{amsmath}
\usepackage{amsfonts}
\usepackage{amssymb}
\usepackage{makeidx}
\usepackage{graphicx}
\usepackage{csquotes}
\usepackage{hyperref}
\usepackage[english]{babel}
\usepackage{amsthm}
\usepackage{bbold}
\usepackage{cases}
\usepackage{enumitem}
\usepackage{fullpage} 
\usepackage{graphicx}
\usepackage{empheq}

\usepackage[title]{appendix}

\newtheorem{thm}{Theorem}[section]    
\newtheorem{prop}[thm]{Proposition}    
\newtheorem{defi}{Definition}[section]                             
\newtheorem{cor}[thm]{Corollary}    
\newtheorem{lemme}[thm]{Lemma}   

\newtheorem{NB}{\textbf{\underline{Remark}}}
 
\newcommand\hh{\mathbb{H}}
\newcommand\bbb{\mathbb{B}}
\newcommand{\pr}{\mathbb{P}}
\newcommand{\esp}{\mathbb{E}}

\newcommand{\eps}{\epsilon}
\setlength{\parskip}{1ex plus 0.5ex minus 0.2ex}

\newcommand{\ga}{\gamma}
\newcommand{\A}{\mathcal{A}}

\newcommand{\ad}{\mbox{ad}}
\newcommand{\ph}{\varphi}
\DeclareMathOperator{\Hess}{Hess}
\newcommand{\sgn}{\textrm{sign}}
\DeclareMathOperator{\Span}{Span}

\title{Couplings of Brownian motions on {$SU(2)$} and {$SL(2,\mathbb{R})$}}
\usepackage{authblk}
\date{}                     
\author[1]{Magalie Bénéfice}
\affil[1]{{Univ. Bordeaux, CNRS, Bordeaux INP, IMB, UMR 5251},{Talence},
	{F-33400}, 
	{France}}
\setcounter{Maxaffil}{0}

\begin{document}
	
	\maketitle

\begin{abstract}

The Lie groups $SU(2)$ and $SL(2,\mathbb{R})$ can be viewed as model spaces in subRiemannian geometry. 
Coupling two subelliptic Brownian motions on $SU(2)$ (resp. $SL(2,\mathbb{R})$) consists in coupling two Brownian motions on the sphere (resp. the hyperbolic plane) and simultaneously their swept areas. Using this approach we propose an explicit construction of a co-adapted successful coupling on $SU(2)$. The strategy is to alternate between reflection and synchronous (with noise) coupling on the sphere. We also describe some more general constructions of co-adapted couplings on $SU(2)$ and also on $SL(2,\mathbb{R})$.
\end{abstract}




\section{Introduction}\label{sec: Introduction}
Let us first recall some of the most important concepts. Here, as we consider Brownian motions on a subRiemannian manifold $G$, a coupling of Brownian motion starting from $g$ and $g'\in G$ will be defined as a probability measure on $\mathcal{C}([0,T[,G\times G)$ (with $T\in[0,+\infty]$) whose first marginal is the law of a Brownian motion $(\bbb_t)_{t}$ starting from $g$ and whose second marginal is the law of a Brownian motion $(\bbb_t')_{t}$ starting from $g'$. In other words, we consider the joint law of $(\bbb_t,\bbb_t')_{t}$.  Here we will focus on co-adapted couplings, i.e., couplings such that the future of the processes only depends on their common past. Note that this notion generalises the notion of Markovian coupling for which the process $(\bbb_t,\bbb_t')_{t}$ is Markovian. We also interest ourselves in finding co-adapted successful couplings, that is, co-adapted couplings for which the processes meet at an almost surely finite time. In the case of $SU(2)$, we prove the existence of such a coupling. This successful coupling is the first one constructed on $SU(2)$ and is explicitly described.

In order to do that, we develop some new geometrical interpretations of the cylindrical coordinates on $SU(2)$ and $SL(2,\mathbb{R})$. Then the main strategy is to couple the driving noises, which are the projections of $\bbb_t$ on some Riemannian manifolds of constant curvature. Note that, after the completion of this work, we have continued the study of successful couplings on these two subRiemannian manifolds by using non co-adapted couplings. This work can be found in~\cite{Nonco-adaptedSU(2)}. More recently, Luo and Neel (\cite{luo2024nonmarkovian}) also looked at non co-adapted couplings using a different strategy to construct efficient couplings (see the definition in Subsection \ref{subsec: Motivation}). Although the present work, which uses co-adapted couplings, does not aim to obtain such efficient couplings, it contributes to the general knowledge of hypoelliptic diffusions. We hope that this will enable future applications which can not be achieved by using efficient couplings. Note also that we announced some of the above results without proofs in the conference paper~\cite{GSI}.\\
 In the next subsection (\ref{subsec: Motivation}), we give an overview of the state of the art and the motivation for this work, in a second subsection (\ref{subsec: Main results}) we summarise our main results and in the last subsection (\ref{subsec: Organisation}) we describe the overall structure of the article.

\subsection{Motivation}\label{subsec: Motivation}
The notion of coupling has been quite developed these last decades, first for Markov chains, then for Markov processes (see~\cite{lindvall2002lectures} for a general introduction). 
The coupling of Brownian motions has been studied on $\mathbb{R}^n$~\cite{kendall-coupling-gnl} and also on Riemannian manifolds (see~\cite{pascu2018couplings}).
Aside from providing a better understanding of the geometry of the state space, couplings are essential for many analysis results involving the harmonic functions and the heat semi-group like Harnack, Poincaré, Sobolev or Wasserstein inequalities (see~\cite{kuwada,WangBInequalities,CranstonRefl} for some examples). In the previous citations, the studied diffusions are elliptic. The case of hypoelliptic diffusions is a current field of research: as an example, the kinetic Langevin diffusion is studied in~\cite{Eberle}, Kolmogorov type diffusions are dealt with in~\cite{CranstonKolmogorov,Kolmogorov}, Brownian motions on the Heisenberg group can be found in~\cite{CranstonKolmogorov,Kolmogorov,kendall-coupling-gnl,kendall2009brownian,kendall2007coupling,banerjee2017coupling,bonnefont2018couplings,CoAdaptSuccessHeisenberg2}. 
In the above examples, the diffusions are of the form $(X_t,z_t)$ with $X_t$ an elliptic diffusion that we call here "driving noise" and with $z_t=f((X_s)_{s\leq t})$ for $f$ a functional (for other examples of diffusion processes of this form, see~\cite{baudoin2023stochastic}). Then, the difficulty is that we can only couple the driving noises $(X_t,X_t')_t$. 

We interest ourselves in Brownian motions on subRiemannian manifolds. In our examples, the Brownian motion is the diffusion process whose infinitesimal generator is the canonical subelliptic subLaplacian operator. As, in general, the natural subRiemannian distance lacks of smoothness, Itô's formula can not be directly applied to compare the processes. When the subRiemannian manifold can be obtained from a Sasakian contact manifold, one idea could be to deal with some smooth Riemannian metrics converging to the subRiemannian metric as in~\cite{baudoin2022variations}. The usual strategy~\cite{kendall2007coupling,banerjee2017coupling,kendall-coupling-gnl,CoAdaptSuccessHeisenberg2} in the Heisenberg group is different. In this case, $X_t$ is a two dimensional Brownian motion and $z_t$ is the Levy swept area which is, up to a constant the signed area swept by $(X_s)_{s\leq t}$ and by the geodesics joining $X_0$ (resp. $X_t$) to the origin. Moreover the subRiemannian distance $d_{cc}$ satisfies:
\begin{equation}\label{equivHeisenberg}
    c_1\left(R_t+\sqrt{|A_t|}\right)\leq d_{cc}\left((X_t,z_t),(X_t',z_t')\right)\leq c_2\left( R_t+\sqrt{|A_t|}\right)
\end{equation}
with $R_t=||X_t-X_t'||_2$ ($||\cdot||_2$ being the Euclidean norm on $\mathbb{R}^2$) and $A_t$ a signed swept area between the two driving noises. Here, $c_1$ and $c_2$ are some positive constants only depending on the structure of the Heisenberg group. Thanks to this comparison, it is possible to evaluate the distance between the two processes.\\
Our goal is to use a similar strategy to deal with couplings in some other SubRiemannian manifolds. We look at the two model spaces $SU(2)$ and $SL(2,\mathbb{R})$ that could play, together with the Heisenberg group, the role of "constant curvature subelliptic manifolds" in dimension $3$ (note that the difficulty to obtain a consistent definition of the curvature for subelliptic manifold is a real problem that we will not develop here, see~\cite{BakryLiYau,AgravechCurvature,FalbelCurvature} for some notions of curvature on these model spaces). Baudoin and Bonnefont proved in~\cite{baudoin2009subelliptic,bonnefont2012subelliptic,bonnefont-these} that the Brownian motion on $SU(2)$ (resp. $SL(2,\mathbb{R})$) can again be written as $(X_t,z_t)$ with $X_t$ a Brownian motion on the sphere $S^2$ (resp. on the hyperbolic plane $\mathbf{H}^2$) induced by the Hopf fibration and $z_t$ a signed swept area. (Note that a different decomposition has been obtained in~\cite{SL(2)Malliavin,AlbeverioSL(2)} for $SL(2,\mathbb{R})$ but for the elliptic Brownian motion.)  As the structure is similar to the one in the Heisenberg group, it seems reasonable to try to generalise some of the existing coupling methods, in particular to construct successful couplings for $SU(2)$. 

The successful couplings are the couplings for which the first meeting time (or "coupling time") $\tau:=\inf\{t>0|\bbb_t=\bbb_t'\}$ of the Brownian motions is almost surely finite. Even in Riemannian manifolds, such couplings do not always exist. For example there is no successful coupling in the hyperbolic plane (this will be briefly explained with reference in subsection \ref{subsec: Main results}). 
Historically, a first motivation in constructing successful couplings is to obtain estimates on the total variation distance between the laws of the Brownian motions. Indeed for every coupling of Brownian motions $(\bbb_s,\bbb_s')_s$ and every $t>0$, we have: 
$\pr(\tau>t)\geq d_{TV}(\mathcal{L}(\bbb_t),\mathcal{L}(\bbb'_t))$
(see the Coupling inequality in~\cite{asmussen2003applied}, Chapter VII).
Note that the couplings (not necessarily successful) for which this inequality becomes an equality are called maximal couplings. If it has been proved that such couplings always exist in the case of cadlag processes on Polish spaces~\cite{MaximalCouplingSverchkov}, they can be very difficult to study (explicit construction, simulation, estimation of a coupling rate) as one will often need some knowledge of the future of one of the process. For a Riemannian manifold having a kind of "reflection structure" just like the plane or the sphere, this can be done by using the reflection coupling (see~\cite{ReflKuwada,HsuSturmMaxEuc}). In this specific case the coupling is co-adapted and even Markovian. More generally, the existence of Markovian maximal couplings of regular elliptic diffusions has been studied in~\cite{BanerjeeMaxElliptic}. This existence depends on the same rigidity properties of the Riemannian manifold as well as on strong conditions on the drift part of the diffusion processes. In fact, Markovian maximal couplings are rare in Riemannian manifolds.
%
\\
On our model spaces, constructing a successful coupling means that we need to "couple" at an almost surely finite time the driving noises $X_t$ and $X_t'$ together with the swept areas $z_t$ and $z_t'$. In~\cite{kendall2007coupling}, Kendall constructed a successful co-adapted coupling on the Heisenberg group. In~\cite{banerjee2017coupling}, Banerjee, Gordina and Mariano proposed another coupling, not co-adapted but still successful. In fact, this coupling is efficient, i.e., the coupling rate $\pr(\tau>t)$ has the same order than the total variation distance $d_{TV}\left(\mathcal{L}(\bbb_t),\mathcal{L}(\bbb_t')\right)$ for $t$ large. In particular, if the driving noises start from the same point, the coupling rate of the non co-adapted coupling is better ($\pr(\tau>t)\lesssim \frac{1}{t}$) than for any co-adapted one ($\pr(\tau>t)\gtrsim \frac{1}{\sqrt{t}}$). Moreover, in~\cite{banerjee2017coupling}, the non co-adapted coupling leads to gradient estimates for harmonic functions just like the Cheng-Yau inequality. Note that, using a different approach, this last inequality has been proven in~\cite{GradientSansCouplageGordina} for an enlarged class of subRiemannian manifolds.\\
In the present article we deal with the generalisation of Kendall's co-adapted coupling.
Note that other co-adapted successful couplings on the Heisenberg group can be found in~\cite{kendall-coupling-gnl,CoAdaptSuccessHeisenberg2}. We did not work on their generalisations for the moment. 

As mentioned before, during the process of submission of this work, successful and efficient non co-adapted couplings on $SU(2)$ (and in a weaker sense on $SL(2,\mathbb{R})$) has been studied in~\cite{Nonco-adaptedSU(2)}, using a generalisation of~\cite{banerjee2017coupling}, and in~\cite{luo2024nonmarkovian}, with a new strategy of coupling. In the two cases the obtained coupling rate is exponentially decreasing.


\subsection{Main results}\label{subsec: Main results}
In the same way that we have a natural submersion from the Heisenberg group $\hh$ to the plane, in the case of $SU(2)$, one can define a submersion $\Pi_1$ induced by the Hopf fibration from $SU(2)$ to the sphere $S^2$. Similarly, one can define a submersion $\Pi_2$ from $SL(2,\mathbb{R})$ to the hyperbolic plane $\mathbf{H}^2$. We are going to consider some good coordinates called the cylindrical coordinates on our subRiemannian manifolds. 

In the first following result we use these cylindrical coordinates to compare (with respect to the subRiemannian structure) two elements on $SU(2)$ (resp. $SL(2,\mathbb{R})$) by comparing their projections on $S^2$ (resp. $\mathbf{H}^2$). 
\begin{prop}\label{prop: distance ponctuelle}
	Let $g=(\ph,\theta,z)$ and $g'=(\ph',\theta',z')$ be two elements of $SU(2)$ (resp. $SL(2,\mathbb{R})$) written in cylindrical coordinates. Using the submersions defined above, we denote $x:=\Pi_1(g)$ and $x':=\Pi_1(g')$ (resp. $x:=\Pi_2(g)$ and $x':=\Pi_2(g')$). The cylindrical coordinates of $g^{-1}\cdot g'$ are given by $(\rho,\Theta,\zeta)$ with for $\rho$ and $\zeta$:
	\begin{itemize}
		\item $\rho$ equals to the usual Riemannian distance on $S^2$ (resp. on $\mathbf{H}^2$) between $x$ and $x'$.
		\item $\zeta\in]-2\pi,2\pi]$ and $\zeta\equiv z'-z+\sgn(\theta-\theta')\mathcal{A}_{x',x,N_0}\mod(4\pi)$
		with $\mathcal{A}_{a,b,c}$ the area of the spherical (resp. hyperbolic) triangle of vertices $a, b$ and $c$ and $N_0$ a pole induced by the submersion $\Pi_1$ (resp. $\Pi_2$). Note that $\sgn(\theta-\theta')\mathcal{A}_{x',x,N_0}$ is in fact the signed area of the oriented triangle of vertices $x'$, $x$ and $N_0$.
	\end{itemize}
	In particular, we directly obtain the estimate:
	 \begin{equation}\label{eq: equivPonctuelle}
		c_1( \rho^2+|\zeta|)\leq d_{cc}^2(g,g')\leq c_2(\rho^2+|\zeta|)
	\end{equation} 
	with $c_1$ and $c_2$ two positive constants only depending on the subRiemannian structure of $SU(2)$ (resp. $SL(2,\mathbb{R})$).
\end{prop}

Let us now consider a Brownian motion $\bbb_t$ on $SU(2)$ (resp. $SL(2,\mathbb{R})$) and $(\ph_t,\theta_t,z_t)$ its cylindrical coordinates. Baudoin and Bonnefont proved in~\cite{baudoin2009subelliptic,bonnefont2012subelliptic,bonnefont-these} that:
\begin{itemize}
    \item $\left(X_t:=\Pi_1(\bbb_t)\right)_t$ (resp. $\Pi_2(\bbb_t)$) is a Brownian motion on $S^2$ (resp. on $\mathbf{H}^2$).
    \item $z_t-z_0$ is the signed swept area (modulo $4\pi$) of $X_t$ with respect to the fixed pole $N_0$.
\end{itemize}
In particular, these two processes entirely describe $\bbb_t$ and $\bbb_t$ is characterized by $(X_s)_{s\leq t}$. Note that this result is also written in Proposition \ref{distanceB} and explained in subsection \ref{subsec: Interpretation}.

From Proposition \ref{prop: distance ponctuelle}, we obtain:
\begin{cor}\label{distanceB2}
	Let us take $\bbb_t$ and $\bbb_t'$ two Brownian motions on $SU(2)$ (resp. $SL(2,\mathbb{R})$) starting from $g$ and $g'$ respectively, with $(\ph_t,\theta_t,z_t)$ and $(\ph'_t,\theta'_t,z'_t)$ their cylindrical coordinates.
	We denote $X_t:=\Pi_1(\bbb_t)$ (resp. $\Pi_2(\bbb_t)$) and $Y_t:=\Pi_1(\bbb_t')$ (resp. $\Pi_2(\bbb_t')$). The cylindrical coordinates of ${\left(\bbb_t\right)}^{-1}\cdot\bbb'_t$ are given by $(R_t,\Theta_t,\zeta_t)$ with:
	\begin{itemize}
		\item $R_t$ the Riemannian distance between $X_t$ and $Y_t$;
		\item $\zeta_t\in ]-2\pi,2\pi]$ and $\zeta_t\equiv A_t+z_0'-z_0+\sgn(\theta_0-\theta_0')\mathcal{A}_{Y_0,X_0,N_0}\mod(4\pi)$
		with $A_t$ the signed swept area between $(X_s)_{s\leq t}$ and $(Y_s)_{s\leq t}$ as defined in Definition \ref{def: aireBalayee1}.
	\end{itemize}
		As previously, we directly obtain the estimate:
	 \begin{equation}\label{eq: equivBrownien}
		c_1(R_t^2+|\zeta_t|)\leq d_{cc}^2(\bbb_t,\bbb_t')\leq c_2 (R_t^2+|\zeta_t|)
	\end{equation} 
	with $c_1$ and $c_2$ as in Proposition \ref{prop: distance ponctuelle}.
\end{cor}
With this Corollary we are able to compare two Brownian motions in the subRiemannian metric. Note that the relation (\ref{eq: equivBrownien}) is similar to the relation (\ref{equivHeisenberg}) obtained in the case of the Heisenberg group. 
	As a matter of fact, as in the Heisenberg group, a way to construct and study a coupling $(\bbb_t^x,\bbb_t^y)$ of Brownian motions on the subRiemannian manifold $SU(2)$ (resp. $SL(2,\mathbb{R})$), is to construct a coupling $(X_t,Y_t)$ of Brownian motions on the Riemannian manifold $S^2$ (resp. $\mathbb{H}^2$) and then to study the processes $R_t$ and $A_t$ described in Corollary \ref{distanceB2}. This is the method we will use in all this work. 
	
	As explained before, one of the aim of this article is the determination of successful couplings in the model spaces. In fact, it is evident that such a coupling does not exist in the case of $SL(2,\mathbb{R})$ as we will explain just now. If $\left(\bbb_t,\bbb_t'\right)$ is a successful coupling on the subRiemannian manifold, then, $\left(\Pi_2(\bbb_t),\Pi_2(\bbb_t')\right)_t$ is a successful coupling of Brownian motions on the hyperbolic plane $\mathbf{H}^2$. It is well known that successful couplings do not exist on $\mathbf{H}^2$. As a proof, we can use Theorem (5.4) from Wang~\cite{wang2002liouville} (this result can be applied to every Riemannian manifold whose Ricci curvature is bounded below): as there exist some non constant bounded harmonic functions on the Riemannian manifold $\textbf{H}^2$, there is no successful coupling of Brownian motions on it. For $SU(2)$, we generalise the co-adapted successful coupling constructed by Kendall on the Heisenberg group~\cite{kendall2007coupling}. This is our main theorem:
		\begin{thm}\label{successful}
	There exists a co-adapted successful coupling in $SU(2)$.
	\end{thm}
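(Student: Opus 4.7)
The plan is to adapt the successful co-adapted coupling constructed by Kendall~\cite{kendall2007coupling} on the Heisenberg group to the compact setting of $SU(2,\mathbb{C})$. Via the cylindrical coordinates of Section~2, the two $SU(2,\mathbb{C})$-valued Brownian motions meet at time $t$ if and only if their sphere projections coincide ($R_t=0$) and their relative swept area vanishes modulo $4\pi$, i.e. $\tilde{\mathcal{A}}_t=0$. The task therefore reduces to constructing a coupling of Brownian motions on $S^2$, fitting the framework~(\ref{equa}), for which the stopping time $\tau := \inf\{t>0 : R_t = 0 \text{ and } \tilde{\mathcal{A}}_t = 0\}$ is almost surely finite.

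I would first fix a co-adapted coupling by prescribing, at each time $t$, a state-dependent $2\times 2$ matrix $M_t$ so that $dV_t = M_t \, dU_t + N_t \, dW_t$ for an independent Brownian motion $W$ and a matrix $N_t$ satisfying $M_t M_t^\top + N_t N_t^\top = I$. This ensures $(V_1,V_2)$ is Brownian and adapted to the enlarged filtration. Inserting the resulting joint quadratic variations into Proposition~\ref{relations} with $k=1$, one obtains a closed two-dimensional SDE for $(R_t, A_t)$ whose drift and diffusion coefficients are determined by $M_t$.

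Following Kendall, the matrix $M_t$ is to be chosen as a feedback of $(R_t, \tilde{\mathcal{A}}_t)$, interpolating between two elementary regimes: a \emph{reflection} coupling (taking $M_t = -I$ in the radial coordinate), which drives $R_t$ toward $0$ via the attractive drift $\cot(R_t)$ and the doubled noise $-2\,dU_1(t)$, and a \emph{rotated} coupling which, through the cross-variation term $\tfrac{1}{2\cos^2(R_t/2)}(dU_2\cdot dV_1 - dV_2\cdot dU_1)$ of Proposition~\ref{relations}, produces a nontrivial drift for $A_t$ directed toward $0$. The interpolation parameter is selected so that $R_t$ is prevented from touching $0$ whenever $\tilde{\mathcal{A}}_t$ is still far from $0$.

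The main obstacle is proving $\tau<\infty$ almost surely. The difficulty, already present in the Heisenberg case, is that the martingale coefficient of $A_t$ is proportional to $\tan(R_t/2)$, so the area degenerates as $R_t\to 0$ and a naive reflection coupling reaches $R_t=0$ with $\tilde{\mathcal{A}}_t$ generically nonzero. I would handle this by confining the process to a region of the form $\{R \geq \phi(|\tilde{\mathcal{A}}|)\}$ for a suitable continuous $\phi$ with $\phi(0)=0$, and by building a Lyapunov function of the form $F(R,\tilde{\mathcal{A}}) = R^2 + c|\tilde{\mathcal{A}}|$ (whose shape is suggested by Proposition~\ref{distanceB2}), showing that its generator under the coupled dynamics is bounded above by a negative constant outside a neighbourhood of the origin. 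Compactness of $S^2$ simplifies the analysis by ruling out escape to infinity. An optional stopping argument then yields $\tau<\infty$ almost surely, and Theorem~\ref{distanceB} translates this into the almost sure meeting of the $SU(2,\mathbb{C})$-Brownian motions, proving the claim.
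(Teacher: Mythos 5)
Your sketch takes a genuinely different route from the paper's. The paper never creates a drift on the area: in every regime it uses ($K$ diagonal, i.e.\ reflection, synchronous, fixed-distance, perverse), $K_{1,2}=K_{2,1}=0$, so $Drift(dA_t)=0$ and $A_t$ is always a local martingale. Success is obtained by Kendall's alternating strategy, switching between reflection coupling (which shrinks $R_t$) and fixed-distance coupling (which freezes $R_t$ while the martingale $A_t$ is allowed to run), controlled through the ratio $|A_t|/R_t^2\in[\kappa-\eps,\kappa]$; finiteness of $\tau$ is then extracted from the time change $\sigma(t)=\int_0^t 4/R_s^2\,ds$, a Laplace-transform estimate for the sum of the fixed-distance phase lengths, and a law-of-large-numbers argument for the durations of the reflection phases, followed by a restart argument handling the exit time $\tau_\eta$. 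Your proposal instead injects an antisymmetric part into $K_t$ to steer $A_t$ by drift and concludes via a Lyapunov/optional-stopping argument with $F(R,\tilde{\mathcal{A}})=R^2+c|\tilde{\mathcal{A}}|$. That is not ``following Kendall''; it is a different mechanism.

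There is a genuine gap in the Lyapunov step. First, $F$ is not $C^2$ at $\tilde{\mathcal{A}}=0$: the Tanaka--Itô decomposition of $|\tilde{\mathcal{A}}_t|$ contributes a nonnegative local-time term $dL_t^0(\tilde{\mathcal{A}})$, which accumulates each time $\tilde{\mathcal{A}}_t$ crosses $0$ with $R_t>0$ and has the wrong sign for the supermartingale estimate $\mathbb{E}[F_{t\wedge\tau}]\le F_0-\delta\,\mathbb{E}[t\wedge\tau]$; this is never addressed. Second, ``confining the process to $\{R\ge\phi(|\tilde{\mathcal{A}}|)\}$'' is asserted, not constructed: a co-adapted coupling cannot forcibly restrict the $(R_t,\tilde{\mathcal{A}}_t)$-diffusion to a prescribed region, and to make such a containment a consequence of the dynamics you would need to prove a boundary estimate that you do not write down. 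Third, the admissibility constraint $K_tK_t^\top+\hat K_t\hat K_t^\top=I_2$ forces $\|K_t\|\le 1$, so the antisymmetric part of $K_t$ (and hence the achievable drift $\tfrac{1}{2\cos^2(\sqrt{k}R_t/2)}(K_{1,2}-K_{2,1})$ on $A_t$) is bounded, and increasing it necessarily moves $K_{1,1},K_{2,2}$ away from $1$, injecting extra diffusivity and/or unfavorable drift into $R_t$; verifying that some admissible $K_t(R_t,\tilde{\mathcal{A}}_t)$ makes the generator of $F$ uniformly negative, especially near $R_t=0$ where the diffusion coefficient of $A_t$ degenerates like $\tan(\sqrt{k}R_t/2)$, is exactly the crux and is left unchecked. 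This degeneracy is precisely what drives Kendall and this paper to the time-change machinery rather than a direct Lyapunov bound, and your sketch does not show that the Lyapunov route closes.
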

	
	In the case of $\hh$, the coupling of Kendall consists in switching between two coupling models on the plane:
	\begin{itemize}
	    \item The first coupling is meant to act on the distance $R_t$ between the driving noises $X_t$ and $Y_t$. Kendall uses the reflection coupling so that $R_t$ hits $0$ at an a.s. finite time.
	    \item The second coupling is used to keep the swept area $A_t$ comparable to $R_t$. The idea is that, each time $|A_t|$ is too big in relation to $R_t$, $R_t$ stays constant and $|A_t|$ moves like a Brownian motion to reach an acceptable value. This is done with synchronous coupling, also called parallel transport coupling.
	\end{itemize}
	 The first difficulty to generalise this theorem to $SU(2)$ is to obtain explicit constructions of couplings on $S^2$ acting as described above. In particular, if the existence of couplings acting as expected on $R_t$ is known (see~\cite{ReflKuwada} for a reflection coupling and \cite{pascu2018couplings} for a coupling keeping $R_t$ constant) we need another way to describe them if we want to know their impact on the swept area. The second difficulty lies, of course, in the presence of the curvature (the driving noises $X_t$ and $Y_t$ live in $S^2$) that impacts some parts of the proof. The two possibilities to deal with this second difficulty will be to add a control on the upper bound of $R_t$ or to take the compactness of $SU(2)$ into account. Note that the first method enable us to successfully couple the Brownian motions on $S^2$ together with their signed swept area as values in $\mathbb{R}$ (see Theorem \ref{thm: couplingAreaInR}). This is a more general result than Theorem \ref{successful}.
	
To deal with the first difficulty, in this paper, we use Itô depiction of a Brownian motion in a frame in the sense of Emery (see~\cite{emery2012stochastic} for a basic introduction). Two Brownian motions $X_t$ and $Y_t$ on the Riemannian manifold $M=S^2$ can be described by the equations
\begin{equation}\label{equa}
	d^{\nabla}X_t=dU_1(t)e_1^X(t)+dU_2(t)e_2^X(t) \text{ and } d^{\nabla}Y_t=dV_1(t)e_1^Y(t)+dV_2(t)e_2^Y(t)
\end{equation} with:
\begin{itemize}
    \item $U(t):=(U_1(t),U_2(t))$ and $V(t):=(V_1(t),V_2(t))$ two Brownian motions in $\mathbb{R}^2$;
    \item $e^X(t):=(e_1^X(t), e_2^X(t))$ a continuous semi-martingale (adapted to the same filtration than $(X_t)_t$) in the orthonormal frame bundle $\mathcal{O}M$ above $(X_t)_t$.
    \item $e^Y(t):=(e_1^Y(t), e_2^Y(t))$ a continuous semi-martingale (adapted to the same filtration than $(Y_t)_t$) in the orthonormal frame bundle $\mathcal{O}M$ above $(Y_t)_t$.
\end{itemize}
Thus a coupling $(X_t,Y_t)$ is characterized by its starting points and the joint law of\\ $\left(\left(U(t),e^X(t)\right),\left(V(t),e^Y(t)\right)\right)_t$. By choosing a co-adapted coupling of \\$\left(\left(U(t),e^X(t)\right),\left(V(t),e^Y(t)\right)\right)_t$, we can describe and study a wide range of co-adapted couplings $(X_t,Y_t)$ on $M$.

In subsection \ref{subsec: Hessiennes}, we compute the first and second order derivatives of the signed swept area between two smooth curves in some specific basis of the tangent bundle (see Lemma \ref{HessA}). This allows us to obtain a general stochastic equation for $A_t$ in the co-adapted coupling described above. Using some well-known similar results about the distance in $S^2$, we also obtain a stochastic equation for $R_t$ (Lemma \ref{cov rho}).

In fact we can describe this structure for every simply connected Riemannian manifold $M$ with constant curvature $k$ and dimension $2$. Thus, even if no successful coupling can be constructed in $SL(2,\mathbb{R})$, we obtain a model of co-adapted couplings for our two model spaces with a way to compare the processes. All this is summed up in the following proposition:
		\begin{prop}\label{relations}
		Let $M$ be the simply connected Riemannian manifold of dimension $2$ with constant curvature $k$. We denote by $i(M)$ its injectivity radius and by $(x,y)\mapsto\rho(x,y)$ its usual Riemannian metric (note that $i(M)=+\infty$ if $k\leq 0$ and $i(M)=\frac{\pi}{\sqrt{k}}$ if $k<0$).\\ 
		Let us consider $X_0,Y_0\in M$ such that $0<\rho(X_0,Y_0)<i(M)$ and $\left(U(t),V(t)\right)_t$ a co-adapted coupling of Brownian motions on $\mathbb{R}^2$.\\
		For $t\in[0,T]$, we construct $(X_t)_t$ and $(Y_t)_t$ two processes on $M$ starting from $X_0$ and $Y_0$ with $T:=\inf\{t>0\ | \ \rho(X_t,Y_t)=0\}$ and such that $(X_t)_t$ and $(Y_t)_t$ satisfy (\ref{equa}) with:
			\begin{itemize}
			\item $e_1^X(t)=\frac{\exp_{X_t}^{-1}(Y_t)}{\rho(X_t,Y_t)}$ a unitary vector on $T_{X_t}M$;
			\item $e_2^X(t)$ such that $(e_1^X(t),e_2^X(t))$ is a positive orthonormal basis on $T_{X_t}M$;
			\item $(e_1^Y(t),e_2^Y(t))$ the parallel transport of $(e_1^X(t),e_2^X(t))$ along the geodesic joining $X_t$ and $Y_t$ (this defines a positive orthonormal basis on $T_{Y_t}M$).
		\end{itemize}
	 Then $(X_t,Y_t)_t$ is a co-adapted coupling of Brownian motions. Moreover, the processes $(R_t)_t$ and $(A_t)_t$ appearing in Corollary \ref{distanceB2} satisfy:
		\begin{align}
			dR_t&=dV_1(t)-dU_1(t)+\sqrt{k}\cot(\sqrt{k}R_t)dt-\frac{\sqrt{k}}{\sin(\sqrt{k}R_t)} dU_2(t)\cdot dV_2(t)\label{eq: R(t)}\\
			dA_t&=\frac{\tan(\frac{\sqrt{k}R_t}{2})}{\sqrt{k}}(dU_2(t)+dV_2(t))+\frac{1}{2\cos^2(\frac{\sqrt{k}R_t}{2})}( dU_2\cdot dV_1(t)- dV_2(t)\cdot dU_1(t))\label{eq: A(t)}\\
			dR_t\cdot dA_t&=\frac{1}{\sqrt{k}}\tan\left(\frac{\sqrt{k}R_t}{2}\right)\left( dV_1(t)\cdot dU_2(t)- dU_1(t)\cdot dV_2(t)\right)
		\end{align}
		with $dU_i(t)\cdot dV_j(t)$ denoting the derivative of the joint quadratic variation of $U_i$ and $V_j$.
	\end{prop}
	Note that, if $k$ tends to $0$, we obtain some well known relations for the Heisenberg group.
	Thanks to this formula, we are able to give some examples, including reflection coupling as well as a coupling keeping $R_t$ constant. Note also that this last coupling is not the classical parallel transport one but needs the introduction of some additive noise to compensates the curvature. Finally, under some conditions, Proposition \ref{relations} can be true even if $R_t\in\{0,i(M)\}$ as explained briefly in Remark \ref{distanceNulle}. 
	
\subsection{Organisation of the paper}\label{subsec: Organisation}
The structure of the paper is as follows. In section \ref{sec: Preliminaries} we give some preliminaries: some generalities about subRiemannian structure, subLaplacian operator, a presentation of the model spaces and of the cylindrical coordinates as well as the representation of the Brownian motions in cylindrical coordinates on the model spaces. Note that an alternative proof for the expression of the subLaplacian operator in cylindrical coordinates can be found in Appendices \ref{SubLapSU(2)} and \ref{SubLapSL(2)}. Section \ref{sec: Comparison} contains the proofs for Proposition \ref{prop: distance ponctuelle} and Corollary \ref{distanceB2}. In Section \ref{sec: Coupling on manifolds} we compute the first and second order derivatives of the signed swept area. We also give the proof of Proposition \ref{relations} and important examples of co-adapted couplings. Finally, in Section \ref{sec: Successful coupling}, we give the construction of the announced successful coupling and thus the proof of Theorem \ref{successful}. 
  \section{Preliminaries}\label{sec: Preliminaries}
  Note that most of the content of these preliminaries can be found in \cite{baudoin2009subelliptic,bonnefont2012subelliptic,bonnefont-these}. 
  \subsection{The subRiemannian structure}\label{subsec: Structure}
  	Let $G$ be a smooth connected Riemannian manifold of dimension $N$ and $n\leq N$ an integer. For each $x\in G$, we define a vector subspace $\mathcal{H}_x$ of dimension $n$ of $T_xG$, the tangent space in $x$ . This way we define a subbundle of the tangent bundle $TG$ denoted by $\mathcal{H}$ and called horizontal bundle. We can then define the horizontal curves, that is, the smooth curves $\gamma:I\subset \mathbb{R}\to G$ such that $\dot{\gamma}(t)\in \mathcal{H}_{\gamma(t)}$: the horizontal curves are "moving" only with directions in $\mathcal{H}$. Defining a scalar product $\langle\cdot,\cdot\rangle_{\mathcal{H}_{x}}$ on $\mathcal{H}_{x}$ for all $x\in G$, we obtain the length $L(\gamma)$ of the horizontal curve $\gamma$:
	\begin{equation*}
		L(\gamma):=\int_I \sqrt{\langle\dot{\gamma}(t),\dot{\gamma}(t)\rangle_{\mathcal{H}_{\gamma(t)}}}dt.
	\end{equation*}
The Carnot-Carathéodory distance between $x$ and $y\in G$ is finally defined by:
	\begin{equation*}
		d_{cc}(x,y)=\inf\{ L(\gamma) \ | \ \gamma \text{ horizontal curve between } x \text{ and }y\}.
	\end{equation*}
	Let us suppose that the horizontal space satisfies the Hörmander condition, that is, provided $(X_1,...,X_n)$ is a local basis of vector fields in $\mathcal{H}$, the tangent space $TG$ is spanned by the vectors along with all their commutators (obtained by operation with Lie brackets). Then the Carnot-Carathéodory distance is finite and the subRiemannian structure is well defined.
	In the case of stratified Lie groups, we can choose some globally defined and left-invariant vector fields that we will denote $\bar{X}_1,...,\bar{X}_n$. We can then introduce the subLaplacian operator: \begin{equation*}L:=\frac{1}{2}\sum_{1}^{n}\bar{X_i}^2.\end{equation*}
	Because Hörmander conditions are satisfied, this operator is hypoelliptic. This is this operator that we will use to define our Brownian motions.
	
	\subsection{Definition of {$SU(2)$} and {$SL(2,\mathbb{R})$} and cylindrical coordinates}\label{subsec: Definition}
	We begin with the presentation of the two space models and the cylindrical coordinates.
	\begin{itemize}
		\item By $SU(2)$, we denote the group of the unitary two dimensional matrices with complex coefficients and with determinant 1. Considering the manifold structure induced of the usual topology on the matrices group, this is a Lie group.
		Note that we have: \begin{align*}
		SU(2)&=\Big{\{}
		\begin{pmatrix}
			z_1 & z_2\\
			-\bar{z_2} & \bar{z_1}
		\end{pmatrix}
		,\ z_1,\ z_2\in\mathbb{C},\ |z_1|^2+|z_2|^2=1 \Big{\}}\\
		&=\Big{\{}
		\begin{pmatrix}
			\cos(\eta)e^{i\theta_1} & \sin(\eta)e^{i\theta_2}\\
			-\sin(\eta)e^{-i\theta_2} & \cos(\eta)e^{-i\theta_1}
		\end{pmatrix}
		,\ \eta\in\left[0,\frac{\pi}{2}\right],\ \theta_1,\ \theta_2\in[0,2\pi] \Big{\}}.\end{align*}
		The associated Lie algebra is $\mathfrak{su}(2)=\{M\in M_{2}(\mathbb{C}),\ \exp(tM)\in SU(2) \ \forall t>0\}$. It is constituted by the skew-adjoint two dimensional matrices with complex coefficients and trace 0. It is also the tangent space of $SU(2)$ at point $I_2$. A basis of this algebra can be formed by the Pauli matrices. We will use these Pauli matrices up to the multiplicative coefficient $\frac{1}{2}$, we denote: \begin{equation*}X=\frac{1}{2}\begin{pmatrix} 0 &1 \\ -1 &0 \end{pmatrix}, Y=\frac{1}{2}\begin{pmatrix} 0 &i \\ i &0 \end{pmatrix} \text{ and }Z=\frac{1}{2}\begin{pmatrix} i &0 \\ 0 &-i \end{pmatrix}.\end{equation*}
		Then $(X,Y,Z)$ is a basis of $\mathfrak{su(2)}$ and, thanks to the multiplicative coefficient, it also satisfies:
		\begin{equation}\label{LieSU} [X,Y]=Z\ , \ [Y,Z]=X\  \text{and } [Z,X]=Y.\end{equation}
		\\
		It is important to notice that all matrices in $SU(2)$ can be written on the form:
		\begin{equation*}\exp(\ph(\cos(\theta)X+\sin(\theta)Y))\exp(zZ)=\begin{pmatrix}
			\cos\left(\frac{\ph}{2}\right)e^{i\frac{z}{2}} & e^{i(\theta-\frac{z}{2})}\sin\left(\frac{\ph}{2}\right)\\[5 pt]
			-e^{-i(\theta-\frac{z}{2})}\sin\left(\frac{\ph}{2}\right) & \cos\left(\frac{\ph}{2}\right)e^{-i\frac{z}{2}} 
		\end{pmatrix}\end{equation*} 
		(this result is trivial taking $\ph=2\eta$, $z=2\theta_1$ and $\theta\equiv\theta_2+\theta_1\mod(2\pi)$). Thus, we have described a system of coordinates $(\ph,\theta,z)$ for $SU(2)$ with $\ph\in[0,\pi]$, $z\in]-2\pi,2\pi]$ and $\theta\in[0,2\pi[$ called the cylindrical coordinates. We can also consider the coordinate system induced by $\exp(xX+yY)\exp(zZ)$ with $(x,y)\in \mathbb{R}^2$, $z\in]-2\pi,2\pi]$.\\
		Let us remark that the cylindrical coordinates are a good way to observe the link between the sphere $S^2$ and $SU(2)$. Indeed, as there is a trivial diffeormophism between $SU(2)$ and $S^3$, using the Hopf fibration, we can define a submersion from $SU(2)$ to $S^2$. For example, we can define it using quaternions. For $\begin{pmatrix}
			z_1 & z_2\\
			-\bar{z_2} & \bar{z_1}
		\end{pmatrix}\in SU(2)$, with $z_1=x_1+iy_1$ and $z_2=x_2+iy_2$, we define a unique quarternion $q=x_1+x_2i+y_2j+y_1k$. Denoting $N_0=k$ the north pole in $S^3$, we define:
		\begin{equation*}
		\begin{array}{cccc}
			\Pi_1:SU(2)&\to &S^2&\\
			q&\mapsto &qkq^*&=2(x_1y_2+x_2y_1)i+2(y_1y_2-x_1x_2)j+(x_1^2-x_2^2-y_2^2+y_1^2)k
		\end{array}.
		\end{equation*}
		 We can show~(\cite{bonnefont-these}) that $\Pi_1$ define a submersion. Using the cylindrical coordinates $(\ph,\theta,z)$ for $q$, we obtain:\begin{equation*} \Pi_1(q)=\sin(\ph)\sin(\theta)i-\sin(\ph)\cos(\theta)j+\cos(\ph)k.\end{equation*} Thus $\Pi_1$ sends every element of $SU(2)$ described by the cylindrical coordinates $(\ph,\theta,z)$ onto the point of $S^2$ described by the spherical coordinates $(\ph,\theta)$. Moreover the fiber over $(\ph,\theta)$ of this projection is described by $\{(\ph,\theta,z),\ z\in]-2\pi,2\pi]\}$. 
		\item We now deal with the $SL(2,\mathbb{R})$ group. It is the group of two dimensional matrices with real coefficients and with determinant $1$. As for $SU(2)$ this is a Lie group with the topology of the matrices groups. The associated Lie algebra, denoted $\mathfrak{sl}(2)$, is  constituted by the two dimensional matrices with real coefficients and trace 0. It is also the tangent space of $SL(2,\mathbb{R})$ at point $I_2$. The following matrices, using the same notation as for $SU(2)$, form a basis of $\mathfrak{sl}(2)$:   $X=\frac{1}{2}\begin{pmatrix} 1 &0 \\ 0 &-1 \end{pmatrix}$, $Y=\frac{1}{2}\begin{pmatrix} 0 &-1 \\ -1 &0 \end{pmatrix}$ and $Z=\frac{1}{2}\begin{pmatrix} 0 &-1 \\ 1 &0 \end{pmatrix}$. This time, the relation induced by the Lie brackets are:
		\begin{equation}\label{LieSL} [X,Y]=Z\ , \ [Y,Z]=-X\  \text{and } [Z,X]=-Y.\end{equation}
		As before, we can prove that every element of $SL(2,\mathbb{R})$ can be written on the form :
		\begin{align*}&\exp(\ph(\cos(\theta)X+\sin(\theta)Y))\exp(zZ)=\\
		&\begin{pmatrix}
			\cosh\left(\frac{\ph}{ 2}\right)\cos\left(\frac{ z}{2}\right)+\sinh\left(\frac{\ph}{ 2}\right)\cos\left(\theta+\frac{ z}{2}\right) &-\cosh\left(\frac{\ph}{2}\right)\sin\left(\frac{z}{2}\right)-\sinh\left(\frac{\ph}{2}\right)\sin\left(\theta+\frac{z}{2}\right) \\[6 pt]
			\cosh\left(\frac{\ph}{2}\right)\sin\left(\frac{z}{2}\right)-\sinh\left(\frac{\ph}{2}\right)\sin\left(\theta+\frac{z}{2}\right) & \cosh\left(\frac{ \ph}{ 2}\right)\cos\left(\frac{ z}{2}\right)-\sinh\left(\frac{\ph}{ 2}\right)\cos\left(\theta+\frac{ z}{2} \right)
		\end{pmatrix}
\end{align*} 
		with $(x,y)\in \mathbb{R}^2$, $z\in]-2\pi,2\pi]$, $\ph>0$ and $\theta\in[0,2\pi[$  as seen in~\cite{Boscain} for example. We can thus define as well a system of cylindrical coordinates on $SL(2,\mathbb{R})$.\\
	Similarly to $SU(2)$, we can define a submersion $\Pi_2$ from $SL(2,\mathbb{R})$ to $\textbf{H}^2$, the Poincaré upper half-plane: 
		\begin{equation*}\begin{array}{ccc}
			\Pi_2:SL(2)&\to &\textbf{H}^2\\[5pt]
			M=\begin{pmatrix}
			a & b\\
			c & d
		\end{pmatrix}&\mapsto & \dfrac{ai+b}{ci+d}
		\end{array}.
		\end{equation*}
		 Using the cylindrical coordinates $(\ph,\theta,z)$ for $M$, we get: \begin{equation*}\Pi_2(M)=\dfrac{i-\sinh(\ph)\sin(\theta)}{\cosh(\ph)-\sinh(\ph)\cos(\theta)}.\end{equation*} With the help of the Cartesian formula of the hyperbolic metric and trigonometric relations, we obtain that $\Pi_2\left(\ph,\theta,z\right)$ is described by the polar coordinates $(\ph,\theta)$ relative to the pole $N_0=i$. Thus the fiber over $(\ph,\theta)$ of this projection is given by $\{(\ph,\theta,z),\ z\in]-2\pi,2\pi]\}$. 
	\end{itemize}
	For the two cases we denote $\bar{X}$, $\bar{Y}$ and $\bar{Z}$ the left-invariant vector fields associated to $X$, $Y$, $Z$, that is, the vector fields induced by: \begin{equation*}\frac{\partial}{\partial\eps}_{|\eps=0}\left(\exp\big(\ph(\cos(\theta)X+\sin(\theta)Y)\big)\exp(zZ)\exp(\eps A)\right)\text{ for }A=X,Y,Z.\end{equation*}
	We can then provide a subRiemannian structure to $SU(2)$ (respectively $SL(2,\mathbb{R})$) by considering the horizontal bundle $\mathcal{H}=\Span\langle\bar{X},\bar{Y}\rangle$ and the associated Carnot-Carathéodory distance $d_{cc}$. It has been proven by Baudoin and Bonnefont in~\cite{baudoin2009subelliptic,bonnefont2012subelliptic,bonnefont-these}, that there exist two constants $c_1$ and $c_2$ such that, for all elements of $SU(2)$ (resp. $SL(2,\mathbb{R})$) written in cylindrical coordinates $(\ph,\theta,z)$:
	\begin{equation}\label{equiv} c_1(\ph^2+|z|)\leq d_{cc}^2(0,(\ph,\theta,z))\leq c_2 (\ph^2+|z|).
	\end{equation}
	We now interest ourselves in the subLaplacian operator. Here it is given by $L=\frac{1}{2}\left(\bar{X}^2+\bar{Y}^2\right)$. Note that, as the considered vector fields are chosen left invariant, the subRiemannian structure is in fact only determined by the horizontal bundle at point $I_2$, $\mathcal{H}_{I_2}=\Span\langle X,Y\rangle$. In cylindrical coordinates, we have the following expression for the subLaplacian operator:
	\begin{align}
	\text{for $SU(2)$:}	&L=\frac{1}{2}\left(\partial^2_{\ph,\ph}+\frac{1}{\sin^2(\ph)}\partial^2_{\theta,\theta}+\tan^2\left(\frac{\ph}{2}\right)\partial^2_{z,z}+\frac{1}{\cos^2\left(\frac{\ph}{2}\right)}\partial^2_{\theta,z}+\cot(\ph)\partial_{\ph}\right);\label{SubLapfSU}\\
	\text{for $SL(2,\mathbb{R})$:}	&L=\frac{1}{2}\left(\partial^2_{\ph,\ph}+\frac{1}{\sinh^2(\ph)}\partial^2_{\theta,\theta}+\tanh^2\left(\frac{\ph}{2}\right)\partial^2_{z,z}+\frac{1}{\cosh^2\left(\frac{\ph}{2}\right)}\partial^2_{\theta,z}+\coth(\ph)\partial_{\ph}\right)\label{SubLapfSL}.
	\end{align}
		In the literature, these results are obtained using direct matrix computations as in~\cite{bonnefont-these} or (only for $SU(2)$) in~\cite{Ruzhansky-Turunen}\footnote{Here the author uses Euler angle parametrization which is quite similar from the one with cylindrical coordinates.} . We give a different way to prove this result in \ref{SubLapSU(2)} and \ref{SubLapSL(2)} only using the Lie brackets relations (\ref{LieSU}) and (\ref{LieSL}). 
	\subsection{Geometrical interpretation of the Brownian motion}\label{subsec: Interpretation}
The following Proposition gives a formal geometrical interpretation of the Brownian motion on $SU(2)$ and $SL(2,\mathbb{R})$:
\begin{prop}[\cite{baudoin2009subelliptic,bonnefont2012subelliptic,bonnefont-these}]\label{distanceB}
	Let us consider a Brownian motion $\bbb_t$ on $SU(2)$ (resp. $SL(2,\mathbb{R})$) and let $(\ph_t,\theta_t,z_t)$ be its cylindrical coordinates. We have:
\begin{itemize}
    \item The process $\left(X_t:=\Pi_1(\bbb_t)\right)_t$ (resp. $\Pi_2(\bbb_t)$) is a Brownian motion on the sphere $S^2$ (resp. the hyperbolic plane $\mathbf{H}^2$). Its spherical coordinates are given by $(\ph_t,\theta_t)$ with respect to the pole $N_0$ fixed by the submersion.
    \item $\zeta_t-\zeta_0$ is the signed swept area (modulo $4\pi$) of $X_t$ with respect to the fixed pole $N_0$.
\end{itemize}
\end{prop}
We briefly explain the above results. More details can be found in~\cite{baudoin2009subelliptic,bonnefont2012subelliptic,bonnefont-these}.
We define the Brownian motion on $SU(2)$ (resp. $SL(2,\mathbb{R})$) as the diffusion process $(\bbb_t)_t$ with infinitesimal generator $L$. Using cylindrical coordinates, there exist $(\ph_t)_t$, $(\theta_t)_t$ and $(z_t)_t$ three real diffusion processes, such that:  
\begin{equation*}\bbb_t=\exp(\ph_t(\cos(\theta_t)\bar{X}+\sin(\theta_t)\bar{Y}))\exp(z_t \bar{Z}) \text{ and }  
	\begin{cases}
			d\ph_t= dB_t^1+\frac{1}{2}\sqrt{k}\cot(\sqrt{k}\ph_t)dt\\
			d\theta_t=\frac{\sqrt{k}}{\sin(\sqrt{k}\ph_t)}dB_t^2\\
			dz_t=\frac{\tan\left(\frac{\sqrt{k}\ph_t}{2}\right)}{\sqrt{k}}dB_t^2
		\end{cases}
	\end{equation*}
with $B_t^1$ and $B_t^2$ two independent Brownian motions in $\mathbb{R}$. In the above formula, we take $k=1$ for the case of $SU(2)$ and $k=-1$ for the case of $SL(2,\mathbb{R})$.
	Using the first two equations, we get the infinitesimal generator of the diffusion $(\ph_t,\theta_t)$: \begin{equation*}\frac{1}{2}\left(\partial^2_{\ph,\ph}+\frac{k}{\sin^2(\sqrt{k}\ph)}\partial^2_{\theta,\theta}+\sqrt{k}\cot(\sqrt{k}\ph)\partial_{\ph}\right).\end{equation*} 
This is the Laplace Beltrami operator on the sphere $S^2$ (resp. the hyperbolic plane $\textbf{H}^2$) in spherical (resp. polar) coordinates.
	Moreover the signed area $\int_0^t(\cos(\sqrt{k}\ph_s)-1)d\theta_s$ swept by the path of $(\ph_t,\theta_t)_t$ on $S^2$ (resp. $\textbf{H}^2$) with respect of the pole $N_0$, is exactly equal to $z_t-z_0$.
\begin{NB}
	Note that this swept area is a signed value depending of the orientation of the curve $(\ph_t,\theta_t)_t$. For example, if $\theta_t$ is decreasing for all $t$, then we get $z_t-z_0<0$.
	\end{NB}
	
	\section{Comparison of two elements in {$SU(2)$} and {$SL(2,\mathbb{R})$}}\label{sec: Comparison}
	Let us now deal with the proof of Proposition \ref{prop: distance ponctuelle} in the case of $SU(2)$. The case of $SL(2,\mathbb{R})$ uses similar computations and can be found in Appendix \ref{subsec: InterpretationDistanceSL}.
	\begin{proof}[Proof of Proposition (\ref{prop: distance ponctuelle}) for $SU(2)$]
	 Let $g=(\ph,\theta,z)$ and $g'=(\ph',\theta',z')$ be two elements of $SU(2)$ written in cylindrical coordinates. We denote by $(\rho,\Theta,\zeta)$ the cylindrical coordinates of $g^{-1}\cdot g'$. Using matrix computations, we have: \begin{align*}g^{-1}\cdot g'=&\left(\exp\big(\ph(\cos(\theta)X+\sin(\theta)Y)\big)\exp(zZ)\right)^{-1}\exp\big(\ph'(\cos(\theta')X+\sin(\theta')Y)\big)\exp(z'Z)\\
		=&\exp\big(-\ph(\cos(\theta-z)X+\sin(\theta-z)Y)\big)\\
		&\exp\big(\ph'(\cos(\theta'-z)X+\sin(\theta'-z)Y)\big)\exp\big((z'-z)Z\big)\\
 	\end{align*}
 	Then $\exp\big(\rho(\cos(\Theta)X+\sin(\Theta)Y)\big)\exp(\zeta Z)$ is equal to the matrix product:
	\begin{equation*}
	\begin{pmatrix}
			\cos\left(-\frac{\ph}{2}\right) & e^{i\left(\theta-z\right)}\sin\left(-\frac{\ph}{2}\right)\\[6pt]
			-e^{-i\left(\theta-z\right)}\sin\left(-\frac{\ph}{2}\right) & \cos\left(-\frac{\ph}{2}\right)
		\end{pmatrix}\begin{pmatrix}
			\cos\left(\frac{\ph'}{2}\right) & e^{i(\theta'-z)}\sin\left(\frac{\ph'}{2}\right)\\[6pt]
			-e^{-i(\theta'-z)}\sin\left(\frac{\ph'}{2}\right) & \cos\left(\frac{\ph'}{2}\right)
		\end{pmatrix}.
	\end{equation*}
	In particular we have:\begin{equation}\label{S1}
		\cos\left(\frac{\rho}{2}\right)e^{i\frac{\zeta}{2}}=
		\cos\left(\frac{\ph}{2}\right)\cos\left(\frac{\ph'}{2}\right)+e^{i(\theta-\theta')}\sin\left(\frac{\ph}{2}\right)\sin\left(\frac{\ph'}{2}\right).
	\end{equation}
	From this equation we can get the expected geometrical interpretation on the values $\rho$ and $\zeta$.
	\begin{itemize}
		\item We begin with $\rho$: we are going to prove that $\rho$ is the usual distance on $S^2$ between $x$ and $y$ with $x:=\Pi_1(g)$ and $y:=\Pi_1(g')$. Let us denote $\rho(x,y)$ this distance.\\
		We note that:
		\begin{align*}
			\cos(\rho(x,y))&=<x,y>_{\mathbb{R}^3}\\
			&=\sin(\theta)\sin(\theta')\sin(\ph)\sin(\ph')+\cos(\theta)\cos(\theta')\sin(\ph)\sin(\ph')\\
			&+\cos(\ph)\cos(\ph')\\
			&=\cos(\ph)\cos(\ph')+\sin(\ph)\sin(\ph')\cos(\theta-\theta').
		\end{align*}
		From (\ref{S1}), we get \begin{align*}
			\cos^2&\left(\frac{\rho}{2}\right)=
			\left(\cos\left(\frac{\ph}{2}\right)\cos\left(\frac{\ph'}{2}\right)+\cos\left(\theta-\theta'\right)\sin\left(\frac{\ph}{2}\right)\sin\left(\frac{\ph'}{2}\right)\right)^2\\
			&+\sin^2(\theta-\theta')\sin^2\left(\frac{\ph}{2}\right)\sin^2\left(\frac{\ph'}{2}\right)\\
			&=\frac{1+\cos(\ph)\cos(\ph')}{2}+\frac{\sin(\ph)\sin(\ph')
				\cos(\theta-\theta')}{2}\\
			&=\frac{1+\cos(\rho(x,y))}{2}=\cos^2\left(\frac{\rho(x,y)}{2}\right).
		\end{align*}
		We obtain the announced result as $\rho$ and $\rho(\Pi_1(x),\Pi_1(y))$ live in $[0,\pi]$.\\ Moreover, we get that \begin{equation}\label{Rd}
			\cos(\rho)=\cos(\ph)\cos(\ph')+\sin(\ph)\sin(\ph')\cos(\theta-\theta').
		\end{equation}
		\item We now deal with $\zeta$. We consider $\mathcal{A}_{y,x,N_0}$ the area of the spherical triangle with vertices $y$, $x$ and $N_0$, the north pole. The lengths of the opposite sides are respectively $\ph$, $\ph'$ and $\rho$.
		Using an equivalent of the Heron formula for the spherical triangle (see~\cite{janson2015euclidean}), we have: \begin{equation*}\cos\left(\frac{\mathcal{A}_{y,x,N_0}}{2}\right)=\frac{1}{4\cos\left(\frac{\rho}{2}\right)\cos\left(\frac{\ph}{2}\right)\cos\left(\frac{\ph'}{2}\right)}\left(1+\cos(\ph)+\cos(\ph')+\cos(\rho)\right).\end{equation*}
		Because of (\ref{S1}) and (\ref{Rd}), we get:
		\begin{align*}
			&\cos\left(\frac{\zeta}{2}\right)\\
			&=\frac{1}{\cos\left(\frac{\rho}{2}\right)}\left(\cos\left(\frac{\ph}{2}\right)\cos\left(\frac{\ph'}{2}\right)+\frac{\cos(\rho)-\cos(\ph)\cos(\ph')}{\sin(\ph)\sin(\ph')}\sin\left(\frac{\ph}{2}\right)\sin\left(\frac{\ph'}{2}\right)\right)\\
			&=\frac{1}{4\cos\left(\frac{\rho}{2}\right)\cos\left(\frac{\ph}{2}\right)\cos\left(\frac{\ph'}{2}\right)}\left(\left(1+\cos(\ph)\right)\left(1+\cos(\ph')\right)+\cos(\rho)-\cos(\ph)\cos(\ph')\right)\\
&=\cos\left(\frac{\mathcal{A}_{y,x,N_0}}{2}\right)
		\end{align*}
		Thus we have $|\zeta|\equiv  \mathcal{A}_{y,x,N_0}\mod(4\pi)$.\\
		Moreover, still using (\ref{S1}), we get that $\sin\left(\frac{\zeta}{2}\right)=\sin(\theta-\theta')\frac{\sin\left(\frac{\ph}{2}\right)\sin\left(\frac{\ph'}{2}\right)}{\cos\left(\frac{\rho}{2}\right)}$, and so $\zeta<0$ if and only if $\theta<\theta'$. This gives the expected result.
	\end{itemize}
	As the Carnot-Carathéodory distance is left invariant, we have $d_{cc}(g,g')=d_{cc}(0,g^{-1}\cdot g')$ and we directly deduce (\ref{eq: equivPonctuelle}) from (\ref{equiv}).
	\end{proof}
	Applying Proposition \ref{prop: distance ponctuelle}, we get $\zeta_t\equiv z_t'-z_t\mod (4\pi)+\textrm{sign}(\theta_t-\theta_t') \mathcal{A}_{Y_t,X_t,N_0}$. To prove Corollary \ref{distanceB2}, we just need to remark that $z_t'+\textrm{sign}(\theta_t-\theta_t') \mathcal{A}_{Y_t,X_t,N_0}-z_t-\textrm{sign}(\theta_0-\theta_0') \mathcal{A}_{Y_0,X_0,N_0}$ is equal to the continuous process $A_t$ announced in Corollary \ref{distanceB2} that can be defined as follow:
    \begin{defi}\label{def: aireBalayee1}
        Let $(X_t)_t$ and $(Y_t)_t$ be two Brownian motions on $S^2$ (resp. $\mathbf{H}^2$) such that $Y_s$ is not in the cut locus of $X_s$ for all $s\leq t$. We define the signed swept area $A_t$ as the signed area of the oriented loop starting from $Y_0$, following:	
        \begin{itemize}
	    \item the path ${s\mapsto Y_s, s\leq t}$ from $Y_0$ to $Y_t$;
	    \item the geodesic joining $Y_t$ to $X_t$;
	    \item the path ${s\mapsto X_{t-s}, s\leq t}$ joining $X_t$ to $X_0$;
	    \item finally the geodesic joining $X_0$ to $Y_0$.
	    \end{itemize}
    \end{defi}
    Note that this definition makes sense for any oriented Riemannian manifold. Here the sign of the quantity $A_t$ changes when the paths of $Y_t$ and $X_t$ are crossing. Note also that $A_t$ takes values in $\mathbb{R}$ whereas $\zeta_t$ takes values in $]-2\pi,2\pi]$.
    

\section{Couplings on simply connected manifolds of constant curvature $k$ and variation of the swept area}\label{sec: Coupling on manifolds}
In all that follow we denote by $M$ the simply connected (oriented) Riemannian manifold of dimension $2$ and of constant curvature $k$ (endowed with the usual Riemannian metric) and $i(M)$ its injectivity radius.  
In Subsection \ref{subsec: Hessiennes}, we study the first and second order derivatives of the distance function and of the oriented swept area between two smooth curves in $M$.
In Subsection \ref{subsec: CouplingModel} we explain and prove Proposition \ref{relations}. In Subsection \ref{subsec: co-adapted}, we give some examples of co-adapted couplings. Most of them will be of importance for the construction of the successful coupling (Section \ref{sec: Successful coupling}).
\subsection{First and second order derivatives}\label{subsec: Hessiennes}
Let $x$, $y\in M$ such that $0<\rho(x,y)<i(M)$. We can define:
\begin{itemize}
	\item $e_1^x(x,y)=\frac{exp_x^{-1}(y)}{\rho(x,y)}\in T_xM$
	\item 
	$e_1^y(x,y)$ the parallel transport of $e_1^x$ along the geodesic joining $x$ and $y$;
	\item $e_2^x(x,y)\in T_xM$ such that $\left(e_1^x(x,y),e_2^x(x,y)\right)$ is an orthonormal positive basis of $T_xM$;
	\item $e_2^y(x,y)\in T_yM$ such that $\left(e_1^y(x,y),e_2^y(x,y)\right)$ is an orthonormal positive basis of $T_yM$.
\end{itemize}
We first give a local expression of the oriented swept area between some paths on $M$ which coincides with the one used to define the process $({A}_t)_t$ in Definition \ref{def: aireBalayee1}.
We consider:
\begin{itemize}
    \item $\gamma_x$, $\gamma_y:[0,+\infty[\mapsto M$ two curves $\mathcal{C}^1$ on $M$ starting at $x$ and $y$ respectively;
    \item $(s,t)\in[0,1]\times[0,+\infty[\mapsto c(s,t)\in M$ such that for all $t\geq 0$, $s\mapsto c(s,t)$ is a geodesic starting at $\ga_x(t)$ and ending at $\ga_y(t)$.
\item Provided that $0<\rho\left(\gamma_x(t),\gamma_y(t)\right)<i(M)$  we can define $e_1(s,t):=\frac{\partial_s c(s,t)}{\rho\left(\gamma_x(t),\gamma_y(t)\right)}$ and $e_2(s,t)$ such that $(e_1(s,t),e_2(s,t))$ is an orthonormal positive basis of $T_{c(s,t)}M$ for each $s,t$. In particular $e_1(0,0)=e_1^x(x,y)$ and $e_1(1,0)=e_1^y(x,y)$.
\end{itemize}
Considering the positive local chart $(x^1,x^2)$ induced by the parametrization $(s,\zeta)\mapsto c(s,\zeta)$ on $M$, the volume form is given by $\sqrt{\det(G)}dx^1\wedge dx^2 $ where $G$ is the positive definite symmetric matrix representing the metric in these local coordinates. In particular, we have: \begin{equation*}\det(G)=||\partial_{s}c(s,\zeta)||^2||\partial_{\zeta}c(s,\zeta)||^2-\langle\partial_{s}c(s,\zeta),\partial_{\zeta}c(s,\zeta)\rangle^2=\det\big{(}\partial_{s}c(s,\zeta),\partial_{\zeta}c(s,\zeta)\big{)}^2
\end{equation*}
where the determinant is calculated in the basis $(e_1(s,\zeta),e_2(s,\zeta))$ of $T_{c(s,\zeta)}M$ for each $\zeta,t$.
Note that $x_1(c(s,\zeta))=s$ if and only if $\det\big{(}\partial_{s}c(s,\zeta),\partial_{\zeta}c(s,\zeta)\big{)}> 0$.
Then, the signed swept area can be locally defined by: \begin{equation}\label{FonctionAire}\mathcal{A}(t):=\int_0^1\int_{0}^t \det(\partial_s c(s,\zeta),\partial_{\zeta}c(s,\zeta))d\zeta ds.\end{equation} 
In particular there exists a vector field $V$ on $M\times M$ such that: \begin{equation}\label{eq: V}
\mathcal{A}(t)=\int_0^t\left\langle V_{\left(\gamma_1(\zeta),\gamma_2(\zeta)\right)},\left(\dot{\gamma}_1(t),\dot{\gamma}_2(t)\right)\right\rangle d\zeta.
\end{equation}

The following lemma gives the expressions of the first order derivative and the Hessian of the distance using the basis $\left(e_1^x(x,y),e_2^x(x,y)\right)$ and $\left(e_1^y(x,y),e_2^y(x,y)\right)$ defined above. Even though this result is well known, we will give its proof later in this subsection as it uses the same elements as to prove Lemma \ref{HessA}.
\begin{lemme}\label{cov rho}
	Let $x$, $y\in M$, $r=\rho(x,y)$ with $0<r<i(M)$, $u\in T_xM$ and $v\in T_y M$. Then, we have:
	\begin{equation*}d\rho_{(x,y)}(u,v)=v_1-u_1
	\end{equation*}
	and
	\begin{equation*}
		\Hess(\rho)_{(x,y)}((u,v),(u,v))=
			\sqrt{k}(u_2^2+v_2^2)\cot(\sqrt{k}r)-2\sqrt{k}u_2 v_2\frac{1}{\sin(\sqrt{k}r)}.
	\end{equation*}
	with $u_i=\langle u,e_i^x(x,y)\rangle$ and $v_i=\langle v, e_i^y(x,y)\rangle$ for $i\in\{1,2\}$.
\end{lemme}

We also give a result for what could be seen as the first order derivative and hessian of the signed swept area (if it was a function on $M\times M$). As far as the author knows, this has never been computed before.
\begin{lemme}\label{HessA}
	With the same hypothesis and notations than in Lemma \ref{cov rho} and $V$ defined in (\ref{eq: V}), we have:
	\begin{equation*}\left\langle V_{\left(x,y\right)},\left(u,v\right)\right\rangle=\frac{1}{\sqrt{k}}\tan\left(\frac{\sqrt{k}r}{2}\right)(u_2+v_2),
	\end{equation*}
	\begin{equation*}
		\left\langle \nabla _{\left(u,v\right)}V_{\left(x,y\right)},\left(u,v\right)\right\rangle=\frac{u_2v_1-v_2u_1}{\cos^2(\frac{\sqrt{k}r}{2})}+\tan^2\left(\frac{\sqrt{k}r}{2}\right)(v_2v_1-u_2u_1).
	\end{equation*}
\end{lemme}
Note that the above results are given as general formulas for the three cases $k>0$, $k<0$ and $k=0$ (taking the limits when $k$ tends to $0$ in the last case).

To prove these two lemmas, we need to evaluate in zero the first and second derivatives of the functions $r:t\mapsto \rho(\gamma_x(t),\gamma_y(t))$ and $\A$ with $\gamma_x(t):=\exp_x(tu)$ and $\gamma_y(t):=\exp_y(tv)$. We use the notations $c(s,t)$, $e_1(s,t)$, $e_2(s,t)$ to denote the same objects as previously. In particular, as $0<\rho(x,y)<i(M)$, there exists $t_0>0$ such that $(e_1(s,t),e_2(s,t))$ is well defined for $t\in[0,t_0]$. 

Let define $s\mapsto J(s,t):=\partial_t c(s,t)$. With this choice of $\gamma_x$ and $\gamma_y$, $s\mapsto J(s,t):=\partial_t c(s,t)$ is a Jacobi field for all $t\geq 0$. For $i\in\{1,2\}$, we denote:
\begin{equation*}
   u_i(t):=\langle J(0,t), e_i(0,t)\rangle ,\ u_i:=u_i(0)\text{ and } v_i(t):=\langle J(1,t), e_i(1,t)\rangle, \ v_i:=v_i(0).
\end{equation*} 
In the basis $(e_1(s,t),e_2(s,t))$ we have the following decomposition of this Jacobi field $J$:
\begin{lemme}\label{Lemme: jacobi}
	With the above notations and hypothesis we can write $J(s,t)=j_1(s,t)e_1(s,t)+j_2(s,t)e_2(s,t)$
	with:
	\begin{align}
		j_2(s,t)&=u_2(t)\cos(\sqrt{k}r(t)s)+\frac{v_2(t)-u_2(t)\cos(\sqrt{k}r(t))}{\sin(\sqrt{k}r(t))}\sin(\sqrt{k}r(t)s);\label{eq: J2}\\
		 u_2'(0)&=\sqrt{k}\frac{u_2\cos(\sqrt{k}r)-v_2}{\sin(\sqrt{k}r)}u_1 ;\  v_2'(0)=\sqrt{k}\frac{u_2-v_2\cos(\sqrt{k}r)}{\sin(\sqrt{k}r)}v_1.\label{eq: J3}
	\end{align}
	
\end{lemme}
Note that for all the following computations, we will use the notations $\nabla_s$ for $\nabla_{\partial_s c(s,t)}$ and $\nabla_t$ for $\nabla_{\partial_t c(s,t)}$. This lemma is the main result to prove Lemma \ref{cov rho} and Lemma \ref{HessA}.
\begin{proof}[Proof of Lemma \ref{Lemme: jacobi}]
	For $t$ constant, $s\mapsto e_i(s,t)$ is defined by parallel transport along $s\mapsto c(s,t)$. Thus $\nabla_s e_i(s,t)=0$ and we have: \begin{align*}
	\nabla_sJ(s,t)&=\partial_{s}j_1(s,t)e_1(s,t)+\partial_{s}j_2(s,t)e_2(s,t)\\  \text{ and }\nabla_s\nabla_sJ(s,t)&=\partial^2_{ss}j_1(s,t)e_1(s,t)+\partial^2_{ss}j_2(s,t)e_2(s,t).\end{align*}
	
	By definition of the Jacobi fields, we also have: \begin{align*}
	\nabla_s\nabla_sJ(s,t)&=-R(J(s,t),\partial_sc(s,t))\partial_s c(s,t)\\
	&=-kr(t)^2j_2(s,t)e_2(s,t).\end{align*}
	By solving the differential equation 
$\Bigg{\{}\begin{array}{l}
	\partial^2_{ss}j_2(s,t)=-kr(t)^2j_2(s,t)\\
	j_2(0,t)=u_2(t)\\ j_2(1,t)=v_2(t)
	\end{array}$ we obtain 
	(\ref{eq: J2}).\\ 
We now just have to compute $u_2'(0)$ and $v_2'(0)$. As $u_2(t)=\langle \partial_t c(0,t),e_2(0,t)\rangle$ and $t\mapsto c(0,t)$ is a geodesic, we have:
\begin{equation*}
u_2'(t)=\langle\nabla_t\partial_t c(0,t),e_2(0,t)\rangle+\langle\partial_t c(0,t),\nabla_t e_2(0,t)\rangle=\langle\partial_t c(0,t),\nabla_t e_2(0,t)\rangle.\end{equation*}
Thus, $u_2'(0)=\langle u,\nabla_t e_2(0,t)_{|t=0}\rangle$ and, in the same way, we have $	v_2'(0)=\langle v,\nabla_t e_2(1,t)_{|t=0}\rangle$ .

Note that, as $\langle e_2(s,t),e_2(s,t)\rangle=1$ and $\langle e_2(s,t),e_1(s,t)\rangle=0$, we have:
\begin{align*}
\langle \nabla_t e_2(s,t),e_2(s,t)\rangle&=\frac{1}{2}\partial_t\langle e_2(s,t),e_2(s,t)\rangle=0\text{ and }\\
\langle \nabla_t e_2(s,t),e_1(s,t)\rangle&=\partial_t(\langle e_2(s,t),e_1(s,t)\rangle)-\langle e_2(s,t),\nabla_t e_1(s,t)\rangle=-\langle e_2(s,t),\nabla_t e_1(s,t)\rangle.
\end{align*}
Then, $\nabla_t e_2(s,t)=-\langle\nabla_t e_1(s,t),e_2(s,t)\rangle e_1(s,t)$. We have:
\begin{align*}
\nabla_te_1(s,t)&=\nabla_t\left(\partial_s c(s,t)\times\frac{1}{r(t)}\right)=-\frac{r'(t)}{r(t)^2}\partial_s c(s,t)+\nabla_t(\partial_s c(s,t))\times\frac{1}{r(t)}\\
&=-\frac{r'(t)}{r(t)} e_1(s,t)+\frac{\nabla_sJ(s,t)}{r(t)}\text{ since }\nabla\text{ is torsion free}.
\end{align*}
Thus: \begin{align}
\langle\nabla_t e_1(s,t),e_2(s,t)\rangle&=\frac{1}{r(t)}\partial_s j_2(s,t)\notag\\
&=\sqrt{k}\left(-u_2(t)\sin(\sqrt{k}r(t)s)+\frac{v_2(t)-u_2(t)\cos(\sqrt{k}r(t))}{\sin(\sqrt{k}r(t))}\cos(\sqrt{k}r(t)s)\right).\label{eq: grad e_1}\end{align}
We obtain: \begin{align*}&\nabla_te_2(0,t)=-\sqrt{k}\frac{v_2(t)-u_2(t)\cos(\sqrt{k}r(t))}{\sin(\sqrt{k}r(t))}e_1(0,t)\\
\text{ and } &\nabla_te_2(1,t)=-\sqrt{k}\frac{v_2(t)\cos(\sqrt{k}r(t))-u_2(t)}{\sin(\sqrt{k}r(t))}e_1(1,t).\end{align*} This gives (\ref{eq: J3}).

\end{proof}

We can now give the proofs of the two Lemmas.
\begin{proof}[Proof of Lemma \ref{cov rho}]
	The general formulas for $L'(0)$ and $L''(0)$ are classical results that can be found in numerous books. As an example they are given in Lemma 4.1.1 and Theorem 4.1.1 in~\cite{Jost}.
	With our notations, we obtain:
	\begin{align}\label{Jost}
	r'(0)&=\left(\int_0^1\frac{\frac{d}{ds}\langle J(s,t),r(t)e_1(s,t)\rangle}{r(t)}-\frac{\langle J(s,t),r(t)\nabla_s e_1(s,t)\rangle}{r(t)}ds\right)_{|t=0}\\
	r''(0)&=\frac{1}{r}\left(\int_0^1\partial_s j_2(s,t)^2-r(t)^2j_2(s,t)^2\left\langle R(e_1(s,t),e_2(s,t))e_2(s,t),e_1(s,t)\right\rangle ds\right)_{|t=0}\notag\\
	&-\frac{1}{r}\left[\langle\nabla_t\partial_tc(s,t),r(t)e_1(s,t)\rangle_{t=0}\right]_0^1.
	\end{align}
	In particular, as said before, $\nabla_s e_1(s,0)=0$, thus $r'(0)=j_1(1,0)-j_1(0,0)=v_1-u_1$.\\
	We now look at $r''(0)$. We note that $t\mapsto c(s,t)$ is a geodesic for $s=0$ and $s=1$. Thus, we have $\nabla_t\partial_tc(s,t)=0$. As $\left(e_1(s,t),e_2(s,t)\right)$ is an orthonormal basis, \begin{equation*}\left\langle R(e_1(s,t),e_2(s,t))e_2(s,t),e_1(s,t)\right\rangle=k.\end{equation*}

	Thus, $r''(0)=\frac{1}{r}\int_0^1\partial_s j_2(s,0)^2-kr^2j_2(s,0)^2 ds$. Using, (\ref{eq: J2}) and (\ref{eq: grad e_1}) evaluated in $t=0$,
		we get: \begin{align*}r''(0)&=kr\int_0^1\left(\sin^2(\sqrt{k}rs)-\cos^2(\sqrt{k}rs)\right)\left(u_2^2-\frac{(v_2-u_2\cos(\sqrt{k}r))^2}{\sin^2(\sqrt{k}r)}\right)\\
		&-4u_2\frac{v_2-u_2\cos(\sqrt{k}r)}{\sin(\sqrt{k}r)}\cos(\sqrt{k}rs)\sin(\sqrt{k}rs)ds.\end{align*}
		By integrating we finally obtain $r''(0)=\sqrt{k}\left(\left(u_2^2+v_2^2\right)\cot(\sqrt{k}r)-2\frac{u_2v_2}{\sin(\sqrt{k}r)}\right)$.
\end{proof}

\begin{proof}[Proof of Lemma \ref{HessA}]
We first explain why we just need to compute $\A'(0)$ and $\A''(0)$. As $\gamma_x$ and $\gamma_y$ are chosen to be two geodesics, we have $\nabla_t(\dot{\gamma}_x(t),\dot{\gamma}_y(t))=0$. Using relation (\ref{eq: V}), we then obtain:
\begin{equation*}\mathcal{A}'(t)=\langle V_{(\gamma_1(t),\gamma_2(t))},(\dot{\gamma}_1(t),\dot{\gamma}_2(t))\rangle\text{ and }
\mathcal{A}''(t)=\langle \nabla_{(\dot{\gamma}_1(t),\dot{\gamma}_2(t))}V_{(\gamma_1(t),\gamma_2(t))},(\dot{\gamma}_1(t),\dot{\gamma}_2(t))\rangle.
\end{equation*}
For $t=0$, we obtain $\left\langle V_{\left(x,y\right)},\left(u,v\right)\right\rangle$ and $\left\langle \nabla _{\left(u,v\right)}V_{\left(x,y\right)},\left(u,v\right)\right\rangle$.

We now use relation (\ref{FonctionAire}) to compute $\mathcal{A}'(0)$ and $\mathcal{A}''(0)$.
We have: 
\begin{align*}\A'(t)&=\int_0^1\det(\partial_{s}c(s,t),\partial_tc(s,t))ds=\int_0^1\det(r(t)e_1(s,t),J(s,t))ds=r(t)\int_0^1j_2(s,t)ds\\
		&=r(t)\left(\int_0^1 u_2(t)\cos(\sqrt{k}r(t)s)+\frac{v_2(t)-u_2(t)\cos(\sqrt{k}r(t))}{\sin(\sqrt{k}r(t))}\sin(\sqrt{k}r(t)s)ds\right)\\
		&
		=(u_2(t)+v_2(t))\frac{1}{\sqrt{k}}\tan\left(\frac{\sqrt{k}r(t)}{2}\right).
	\end{align*}
	We obtain the first expected result. This also leads to:
	\begin{equation*}
		\A''(t)=\left(u_2'(t)+v_2'(t)\right)\frac{1}{\sqrt{k}}\tan\left(\frac{\sqrt{k}r(t)}{2}\right)+\left(u_2(t)+v_2(t)\right)\frac{r'(t)}{2\cos^2(\frac{\sqrt{k}r(t)}{2})}.
	\end{equation*}
	Note that $r'(0)$ has been calculated for the proof of Lemma \ref{cov rho}, so:
	\begin{equation*}
		\A''(0)=(u_2'(0)+v_2'(0))\frac{1}{\sqrt{k}}\tan\left(\frac{\sqrt{k}r}{2}\right)+(u_2+v_2)\frac{v_1-u_1}{2\cos^2(\frac{\sqrt{k}r}{2})}.
	\end{equation*}
	Finally:
	\begin{align*}
		\A''(0)&=\left(\frac{u_2\cos(\sqrt{k}r)-v_2}{\sin(\sqrt{k}r)}u_1+\frac{u_2-v_2\cos(\sqrt{k}r)}{\sin(\sqrt{k}r)}v_1\right)\tan\left(\frac{\sqrt{k}r}{2}\right)+(u_2+v_2)\frac{v_1-u_1}{2\cos^2(\frac{\sqrt{k}r}{2})}\\
&=	-\left(u_2u_1-v_2v_1\right)\tan^2\left(\frac{\sqrt{k}r}{2}\right)+\left(u_2v_1-v_2u_1\right)\frac{1}{\cos^2(\frac{\sqrt{k}r}{2})}.\end{align*}
\end{proof}
\subsection{Description of a coupling model}\label{subsec: CouplingModel}
	In this subsection we study the processes $R_t$ and $A_t$ for co-adapted couplings on the Riemannian manifold $M$. According to Proposition \ref{distanceB}, this will be enough to induce and characterize a large-scale of co-adapted coupling on the subRiemannian manifold $SU(2)$ and $SL(2,\mathbb{R})$. We first recall the general definition of such a coupling.
\begin{defi}
	Let $\mu_t$ and $\nu_t$ the laws at time $t$ of two continuous Markov processes  $(X_t)_t$ and $(Y_t)_t$ on $M$. The process $(\tilde{X}_t,\tilde{Y}_t)_t$ is said a co-adapted coupling of $(X_t)_t$ and $(Y_t)_t$ if:
	\begin{itemize}
		\item the processes $(\tilde{X}_t)_t$ and $(\tilde{Y}_t)_t$ lay in a same filtered probability space $(\mathcal{F}_t)_t$ and follow in each time $t$ the probability laws $\mu_t$ and $\nu_t$ respectively;
		\item for all bounded measurable function $f$, $z\in M$, $s$, $t>0$, the functions\\ $z\mapsto\esp[f(\tilde{X}_{t+s}) \ | \ \mathcal{F}_s, \ \tilde{X}_s=z]$ and $z\mapsto\esp[f(X_{t+s}) \ | \ {X}_s=z]$ (resp. \\$z\mapsto\esp[f(\tilde{Y}_{t+s}) \ | \ \mathcal{F}_s, \ \tilde{Y}_s=z]$ and $z\mapsto\esp[f(Y_{t+s}) \ | \ {Y}_s=z]$) are equal $\pr_{X_s}$-a.s. (resp. $\pr_{Y_s}$-a.s.).
	\end{itemize}
\end{defi}
We can now prove Proposition \ref{relations}.
	\begin{proof}
		As we define the processes until the first time $T$ where $R_t\in\{0,i(M)\}$, $e_1^X(t)$ and $e_1^Y(t)$ are well defined on $[0,T[$. By construction $U$, $V$, $e_1^X$, $e_2^X$, $e_1^Y$ and $e_2^Y$, $X$ and $Y$ are adapted to the same filtration. 
		Thus the processes $(X_t)_{0\leq t\leq T}$ and $(Y_t)_{0\leq t\leq T}$ are Brownian motions on $M$ and we obtain a co-adapted coupling.
		
		To deal with the second part of the proof, we just need to notice that $e_i^X(t)=e_i^{X_t}(X_t,Y_t)$ and $e_i^Y(t)=e_i^{Y_t}(X_t,Y_t)$ for all $i\in\{1,2\}$. We obtain the equation (\ref{eq: R(t)}) by using Lemma \ref{cov rho} and Itô's formula:
		\begin{equation*}
			dR_t=d\rho_{(X_t,Y_t)}(d^{\nabla}X_t,d^{\nabla}Y_t)+\frac{1}{2}\Hess(\rho)(X_t,Y_t)((d^{\nabla}X_t,d^{\nabla}Y_t),(d^{\nabla}X_t,d^{\nabla}Y_t))
			\end{equation*}
		We now deal with Equation (\ref{eq: A(t)}). As (\ref{eq: V}) is true for every $\mathcal{C}^1$ paths $\gamma_1$ and $\gamma_2$, we have:
		\begin{align*}
		    A_t&=\int_0^t \left\langle V_{(X_{\zeta},Y_{\zeta})},\circ d(X_{\zeta},Y_{\zeta})\right\rangle \text{ with }\circ d\text{ the Stratonovich derivative}\\
		 &=\int_0^t \left\langle V_{(X_{\zeta},Y_{\zeta})}, d^{\nabla}(X_{\zeta},Y_{\zeta})\right\rangle+\frac{1}{2}\int_0^t\left\langle \nabla_{d^{\nabla}(X_{\zeta},Y_{\zeta})}V_{(X_{\zeta},Y_{\zeta})},d^{\nabla}(X_{\zeta},Y_{\zeta})\right\rangle .\end{align*}
		
		Using Lemma \ref{HessA}, we obtain the expected result. The last relation in Proposition \ref{relations} is immediate.
		\end{proof}
\begin{NB}\label{distanceNulle}
In fact, by describing the covariant derivative of $(e_i^X(t))_t$ (resp. $(e_i^Y(t))_t$) along $(X_t)_t$ (resp. $(Y_t)_t$) we can define our orthonormal basis as the solutions of a system of stochastic differential equations depending on $U_1, U_2, V_1, V_2$ and $R_t$. In the case where no singularities appear in this system, our coupling keeps sense even if $R_t\in\{0,i(M)\}$. Then we can consider $R_t$ as a signed distance and the results of Proposition \ref{relations} are still true.
\end{NB}

In $\mathbb{R}^2$, there exists a general method to construct co-adapted couplings. The proof of the following proposition can be found in~\cite{kendall2009brownian} for example.
	\begin{prop}\label{co-adapt}
		We consider the filtered probability space $(\Omega, (\mathcal{F}_t)_t,\mathbb{P})$. The following assertions are equivalent:
		\begin{enumerate}[label=(\roman*)]
		    \item $(U,V)$ is a co-adapted coupling of Brownian motions in $\mathbb{R}^2$;
		\item Enlarging the filtration if needed, there exists a two dimensional Brownian motion $W$ adapted to the filtration and independent of $U$, and $K(t)$, $\hat{K}(t)\in \mathcal{M}_2(\mathbb{R})$ with $K(t)K(t)^T+\hat{K}(t)\hat{K}(t)^T=I_2$, $K(t)$, $\hat{K}(t)\in \mathcal{F}_t$  such that :\begin{equation}\label{co-adapt2}
			dV(t)=K(t) dU(t)+\hat{K}(t) dW(t).
		\end{equation}
		\end{enumerate}
	\end{prop}
	
%
	Using (\ref{co-adapt2}), we can rewrite our relations for $R_t$ and $A_t$:
	\begin{prop}
		For a co-adapted coupling satisfying Proposition \ref{relations} and relation (\ref{co-adapt2}), we have:
		\begin{align*}
			dR_t\cdot dR_t&=2(1-K_{1,1}(t))dt;\\
			dR_t&\stackrel{(m)}{=}\sqrt{k}\cot(\sqrt{k}R_t)dt-\frac{\sqrt{k}}{\sin(\sqrt{k}R_t)}K_{2,2}(t)dt=\sqrt{k}\frac{\cos(\sqrt{k}R_t)-K_{2,2}(t)}{\sin(\sqrt{k}R_t)}dt;\\
			dA_t\cdot dA_t&=2\frac{\tan^2(\frac{\sqrt{k}R_t}{2})}{k}(1+K_{2,2}(t))dt;\\
			dA_t&\stackrel{(m)}{=}\frac{1}{2\cos^2(\frac{\sqrt{k}R_t}{2})}(K_{1,2}(t)- K_{2,1}(t));\\
			dR_t\cdot dA_t&=\frac{1}{\sqrt{k}}\tan\left(\frac{\sqrt{k}R_t}{2}\right)( K_{1,2}(t)-K_{2,1}(t))dt.
		\end{align*}
	\end{prop}
	\subsection{Some examples of co-adapted couplings on the Riemannian manifold $M$}\label{subsec: co-adapted}
In this subsection, we give some examples of co-adapted couplings on $M$ constructed by using (\ref{co-adapt2}). As explained in the Proposition \ref{distanceB}, all these constructions induce co-adapted couplings of Brownian motions on the subRiemannian manifolds $\hh$, $SU(2)$ and $SL(2,\mathbb{R})$.
		\begin{itemize}
			\item \underline{Synchronous coupling:} We take $K(t)=I_2$, $\hat{K}(t)=0$, $dV(t)=dU(t)$. We get:
			\begin{align*}
			    &dR_t\cdot dR_t=0;\\
			    &Drift(dR_t)=\sqrt{k}\frac{\cos(\sqrt{k}R_t)-1}{\sin(\sqrt{k}R_t)}dt=-\sqrt{k}\times \tan\left(\frac{\sqrt{k}R_t}{2}\right)dt.
			\end{align*} Thus $R_t$ is deterministic and $R_t=\frac{2}{\sqrt{k}}\arcsin\left(e^{-\frac{kt}{2}}\sin(\frac{\sqrt{k}R_0}{2})\right)$.	We also have:
		 \begin{equation*} dA_t\cdot dA_t=4\frac{\tan^2(\frac{\sqrt{k}R_t}{2})}{k}dt\text{ and }
			Drift(dA_t)=0.\end{equation*}
			In particular, $A_t$ is a martingale.\\
			
			\item \underline{Reflection coupling:}	For $K(t)=\begin{pmatrix}
				-1 & 0 \\
				0 & 1
			\end{pmatrix}$, $\hat{K}(t)=0$, $dV_1(t)=-dU_1(t)$ and $dV_2(t)=dU_2(t)$, we get:
			\begin{equation*}
			    dR_t\cdot dR_t=4dt\text{ and }Drift(dR_t)=-\sqrt{k}\times \tan\left(\frac{\sqrt{k}R_t}{2}\right).
			\end{equation*}
			As before, we have:  \begin{equation*} dA_t\cdot dA_t=4\frac{\tan^2\left(\frac{\sqrt{k}R_t}{2}\right)}{k}dt\text{ and }
			Drift(dA_t)=0.\end{equation*}
			Note that, using Remark \ref{distanceNulle}, we can show that $R_t$ keeps sense even if $R_t=0$.
			\item \underline{Perverse coupling:} (We use here the terminology introduced by Kendall in~\cite{kendall2009brownian} to name some shy couplings of Brownian motions on $\mathbb{R}^2$.)\\
			For $K(t)=\begin{pmatrix}
				1 & 0 \\
				0 & -1
			\end{pmatrix}$, $\hat{K}(t)=0$, $dV_1(t)=dU_1(t)$ and $dV_2(t)=-dU_2(t)$, we get:
			\begin{align*}
			&dR_t\cdot dR_t=0;\\ &Drift(dR_t)=\sqrt{k}\frac{\cos(\sqrt{k}R_t)+1}{\sin(\sqrt{k}R_t)}dt=\sqrt{k}\times \cot\left(\frac{\sqrt{k}R_t}{2}\right).
			\end{align*}
			Thus $R_t$ is deterministic and $R_t=\frac{2}{\sqrt{k}}\arccos\left(e^{-\frac{kt}{2}}\cos\left(\frac{\sqrt{k}R_0}{2}\right)\right)$ for $k\neq0$. We also have $A_t$ constant.
		\end{itemize}
		In all these examples, $dR_t\cdot dA_t=0$.\\
	We also can add a noise to these couplings in order to remove the drift part. In particular, this will permit to obtain a coupling with constant distance between the Brownian motions.
	\begin{itemize}
		\item \underline{Synchronous coupling with noise/ fixed-distance coupling:}\\\\
		Taking $K(t)=\begin{pmatrix}
			1 & 0 \\
			0 & \cos(\sqrt{k}R_t)\end{pmatrix}$, $\hat{K}(t)=\begin{pmatrix}
			0 & 0 \\
			0 & \sin(\sqrt{k}R_t)\end{pmatrix}$, we get $R_t$ constant. We also have: 
		\begin{align*}
		dA_t\cdot dA_t&=2\frac{\tan^2(\frac{\sqrt{k}R_t}{2})}{k}(1+\cos(\sqrt{k}R_t))dt=\frac{4}{k}\sin^2\left(\frac{\sqrt{k}R_t}{2}\right)dt\\
		&=\frac{4}{k}\sin^2\left(\frac{\sqrt{k}R_0}{2}\right)dt;\\
		Drift(dA_t)&=0.\end{align*}
		Note that $A_t$ is a Brownian motion up to a multiplicative constant.\end{itemize}
		We can do the same for the reflection coupling.
		\begin{itemize}
		\item \underline{Reflection coupling with noise:}	\\
		For $K(t)=\begin{pmatrix}
			-1 & 0 \\
			0 & \cos(\sqrt{k}R_t)
		\end{pmatrix}$, $\hat{K}(t)=\begin{pmatrix}
			0 & 0 \\
			0 & \sin(\sqrt{k}R_t)\end{pmatrix}$, we get:
			\begin{equation*} dR_t\cdot dR_t=4dt\text{ and }Drift(dR_t)=0.\end{equation*} 
			Thus $\frac{1}{2}R_t$ is a Brownian motion. We also have: \begin{equation*}dA_t\cdot dA_t=\frac{4}{k}\sin^2\left(\frac{\sqrt{k}R_t}{2}\right)dt\text{ and }
		Drift(dA_t)=0.\end{equation*}
	\end{itemize}
	For these two couplings, $dR_t\cdot dA_t=0$ too.
	\begin{NB}
		\begin{itemize}
			\item Let $x,y\in M$ (with $0<\rho(x,y)<i(M)$ if $k>0$) and $f:[0,\infty[\to[0,\infty[$ a positive (deterministic) function such that $f(0)=\rho(x,y)$. In~\cite{pascu2018couplings}, Pascu and Popescu proved that there exists a co-adapted coupling of Brownian motions $(X_t,Y_t)$ starting at $(x,y)$ such that $R_t=f(t)$ if and only if $f$ is continuous a.s. and satisfies, for a.e. $t\geq 0$, the differential inequality: \begin{equation*}-\sqrt{k}\tan\left(\sqrt{k}\frac{f(t)}{2}\right)\leq f'(t)\leq \sqrt{k}\cot\left(\frac{\sqrt{k}f(t)}{2}\right).\end{equation*}
			In particular the synchronous coupling and perverse coupling described above are the couplings realizing the extrema of this inequality.
	\item In the case where $k=0$ we find all the expected results for the Heisenberg group. See for example~\cite{kendall2007coupling,kendall2009brownian,bonnefont2018couplings}.
	\end{itemize}
	\end{NB}
	\section{Successful couplings on $SU(2)$}\label{sec: Successful coupling}
	We are now interesting ourselves in the construction of the successful coupling on $SU(2)$. To begin with, we construct a successful coupling of Brownian motions on $M$, a Riemannian manifold with curvature $k>0$, and their swept areas for the areas lying in $\mathbb{R}$. As in Section \ref{sec: Coupling on manifolds}, $i(M)$ denotes the injectivity radius of the manifold $M$. In particular, since $k>0$, we have $i(M)=\frac{\pi}{\sqrt{k}}$.
	
	\begin{thm}\label{thm: couplingAreaInR}
	Let $M$ be a Riemannian manifold of constant curvature $k>0$, $x,y\in M$ and $a\in\mathbb{R}$. There exists a coupling $(X_t,Y_t)$ of Brownian motions on $M$ starting from $(x,y)$ such that $\tau:=\inf\{t>0| R_t=0 \text{ and } A_t=a\}$ is a.s. finite with $A_t$ the signed swept area (started from $0$) defined in Definition \ref{def: aireBalayee1}.
	\end{thm}

	As $\zeta\equiv A_t+z_0'-z_0+\sgn(\theta_0-\theta_0')\A_{Y_0,X_0,N}\mod (4\pi)$, the coupling of Theorem \ref{thm: couplingAreaInR} for $a=z_0'-z_0+\sgn(\theta_0-\theta_0')\A_{Y_0,X_0,N_0}$ also induces a successful coupling on $SU(2)$. This proves Theorem \ref{successful}. In fact, using the compactness of $SU(2)$, the last coordinate $z_t$ of any Brownian motion lies in $\mathbb{R}/]-2\pi,2\pi]$. Then we can change some steps of the strategy of Theorem \ref{thm: couplingAreaInR} to obtain a faster coupling. This will be commented at the end of this section in Remark \ref{NB: compactness}.

We first explain the global strategy of the coupling from Theorem \ref{thm: couplingAreaInR}.
As said before, our method is based on the idea from Kendall for coupling two dimensional real Brownian motions and their swept areas (\cite{kendall2007coupling}). The original idea is to switch between reflection and synchronous coupling, using reflection coupling to make $R_t$ decrease, and synchronous coupling to keep the swept area comparable to $R_t^2$ and decreasing as well.\\
	Here, in comparison to Kendall's original proposition, we will add a noise during the "synchronous coupling step" as in the subsection \ref{subsec: co-adapted} in order to keep $R_t$ constant. If not, we would have a strictly positive probability to be trapped in a "synchronous coupling step" without returning in a "reflection coupling step". Note that, for $k\rightarrow 0$, i.e., for real two dimensional Brownian motion, fixed-distance coupling (synchronous coupling with noise) and synchronous coupling describe in fact the same coupling.

	
We first make and explain some hypothesis on the initial parameters.
\begin{itemize}
    \item We can first suppose that $0<R_0<i(M)$. Indeed, by taking $t_0>0$ any deterministic positive real, we can define the coupling $(X_t,Y_t)_{t\in[0,t_0]}$ such that $X_t$ and $Y_t$ are independent. Then we have $0<R_{t_0}<i(M)$ a.s.. 
\item Without loss of generality, we can suppose that $a=0$ by considering the process $A_t-a$ instead of $A_t$. 
\item Using constant fixed-distance coupling if necessary, we can suppose that $A_0=0$ without changing the value of $R_0$ (indeed, $A_t$ is a changed-time Brownian motion during the fixed-distance coupling).
\end{itemize}

Let $\eta>0$ such that $i(M)-2\eta>R_0>0$. We define another stopping time:
	\begin{equation*}\tau_{\eta}:=\inf\{t>0 | R_t\geq i(M)-\eta\}.\end{equation*}
	We are going to study first $\tau\wedge\tau_{\eta}$ instead of $\tau$: this way we will have $R_t<i(M)-\eta$ for all $t<\tau$. We define $\kappa$ and $\eps$ such that $\kappa>\eps>0$.
	
 We will construct the coupling as follow on $[0,\tau\wedge\tau_{\eta}]$ :
	\begin{enumerate}
		\item\label{rfx0} We use the reflection coupling until the process $\frac{|A_t|}{R_t^2}$ starting at $0$ takes the value $\kappa$;
		\item\label{sy} While the process $\frac{|A_t|}{R_t^2}$, starting at $\kappa$, satisfies $\frac{|A_t|}{R_t^2}>\kappa-\epsilon$ we use the fixed-distance coupling;
		\item\label{rfx} While the process $\frac{|A_t|}{R_t^2}$, starting at $\kappa-\eps$, satisfies $\frac{|A_t|}{R_t^2}<\kappa$ we use the reflection coupling.
	\end{enumerate}
	We iterate the steps \ref{sy}. and \ref{rfx}. until $R_t=A_t=0$ or $R_t=i(M)-\eta$.
	\begin{prop}\label{Kendall}
		Under the hypothesis $i(M)-2\eta>R_0>0$, the co-adapted coupling described below satisfies $\tau\wedge\tau_{\eta}<+\infty$ a.s. for $k> 0$. Moreover, we get $\pr(\tau>\tau_{\eta})<1$.
	\end{prop}
	\newcommand{\Ne}{N^{(\epsilon)}}
	\begin{proof}[Proof of Proposition \ref{Kendall}]
		Let us denote $\tau':=\tau\wedge\tau_{\eta}$. We define $\Ne:[0,\tau']\to\{0,1\}$ such that $\Ne(t)=0$ during fixed-distance coupling and $\Ne(t)=1$ during reflection coupling.
		We get, for all $t>0$: 
		\begin{align*}dR_t&=2\Ne(t)dC_t-\sqrt{k}\tan\left(\frac{\sqrt{k}R_t}{2}\right)\Ne(t)dt\\ 
		dA_t&=\frac{2}{\sqrt{k}}\times\frac{\sin(\frac{\sqrt{k}R_t}{2})}{\cos(\Ne(t)\frac{\sqrt{k}R_t}{2})}d\tilde{C}_t
		\end{align*} with $C_t$ and $\tilde{C}_t$ two independent Brownian motions in $\mathbb{R}$.
		Note that, during reflection coupling (i.e., when $\Ne(t)=1$) we have $\frac{|A_t|}{R_t^2}\leq \kappa$, whereas during fixed-distance coupling (i.e., $\Ne(t)=0$) we have $\frac{|A_t|}{R_t^2}\geq \kappa-\eps$. As $R$ varies only during reflection couplings, if $R_t\rightarrow 0$, we have $R_t^2\geq \frac{1}{\kappa}|A_t|$, thus $A_t\rightarrow 0$. In fact, with this strategy we have $\tau'=\inf\{t \ | \ R_t\in\{0,i(M)-\eta\} \}$.
		For the following computation, we take $t<\tau'$. This way we have $0<R_t<i(M)-\eta$.\\\\
		Let us define $\sigma(t):=\int_0^t \frac{4}{R_s^2}ds$ and $K_{\sigma(t)}=\ln(R_t)$. Using Itô's formula, we get: 
		\begin{equation*}
		    d K_{\sigma(t)}\cdot dK_{\sigma(t)}=\frac{4\Ne(t)}{R_t^2}dt=\Ne(t) d\sigma(t)
		\end{equation*}
		\begin{align*}
		Drift(dK_{\sigma(t)})&=-\frac{\sqrt{k}}{R_t}\tan\left(\frac{\sqrt{k}R_t}{2}\right)\Ne(t) dt-\frac{2\Ne(t)}{R_t^2}dt\\
		&=-\Ne(t)\left(\frac{\sqrt{k}R_t \tan\left(\frac{\sqrt{k}R_t}{2}\right)}{4}+\frac{1}{2}\right)d\sigma(t).
		\end{align*} 
		With this change of time, excluding the times when $\Ne(t)=0$ (fixed-distance coupling) during which $K$ stays constant, $\sigma\mapsto K_{\sigma}$ acts like a Brownian motion with a negative drift.\\
		Let us also define $W_{\sigma(t)}=\frac{A_t}{R^2_t}$. Using Itô's formula, we get:
		\begin{align*}
		d W_{\sigma(t)}\cdot dW_{\sigma(t)}&=\left(\frac{4\sin^2\left(\frac{\sqrt{k}R_t}{2}\right)}{kR_t^4\cos^2\left(\Ne(t)\frac{\sqrt{k}R_t}{2}\right)}+\frac{16\Ne A_t^2}{R_t^6}\right)dt\text{ as }\langle dC_t, d\tilde{C}_t\rangle=0\\
	    &=\left(\frac{1}{4}\left(\frac{2}{\sqrt{k}R_t}\right)^2\frac{\sin^2\left(\frac{\sqrt{k}R_t}{2}\right)}{\cos^2\left(\Ne\frac{\sqrt{k}R_t}{2}\right)}+4\Ne(t) W_{\sigma(t)}^2\right)d\sigma(t);\\
	    Drift(dW_{\sigma(t)})&=\left(\frac{2\sqrt{k}}{R_t^3} \tan\left(\frac{\sqrt{k}R_t}{2}\right)\Ne(t)A_t+3\times 4\Ne(t)\frac{A_t}{R_t^4}\right)dt\\
		&=\Ne(t) W_{\sigma(t)}\left(\frac{\sqrt{k}R_t}{2} \tan\left(\frac{\sqrt{k}R_t}{2}\right)+3\right) \ d\sigma(t).
	    \end{align*}
		Finally, $dK_{\sigma(t)}\cdot dW_{\sigma(t)}
		=-2\frac{A_t}{R_t^4}\times 4\Ne(t) dt=-2\Ne(t) W_{\sigma(t)} \ d\sigma(t)$.\\
		
		Let $S:=\sigma(\tau')$. For the following part, in order to simplify the notations, we will denote $\sigma$ instead of $\sigma(t)$ and $\Ne(\sigma)$ instead of $\Ne(t)$.
		We want to show that $\tau'<+\infty$ a.s.. As $\int_0^{S} e^{2K_{\sigma}}d\sigma=\int_0^\tau 4e^{2\log(R_t)}\frac{dt}{R_t^2}=4\tau'$, this is the same as showing that $\int_0^{S} e^{2K_{\sigma}}d\sigma<+\infty$ a.s..\\
		First of all let us note that the number of changes of type of coupling (reflection coupling/ fixed-distance coupling) is countable as it is finite on all closed and bounded interval of time  $[S_1,S_2]$ such that $S_2<\tau'$. Actually if we have an infinite number of changes of type of coupling, we can define (considering the time scale induced by $\sigma$) two sequences $(\sigma_n^s)_n$ and $(\sigma_n^r)_n$ such that $\sigma_{n}^s<\sigma_n^r<\sigma_{n+1}^s$ for all $n\geq 0$ and such that $\Ne(\sigma)=0$ on $[\sigma_n^s,\sigma_n^r[$ and $\Ne(\sigma_n^r)=1$. As $S_2<\tau'$, we have $R_t\neq 0$ and $W_{\sigma}$ well defined on $[S_1,S_2]$ such that $|W_{\sigma_n^s}|=\kappa$ and $|W_{\sigma_n^r}|=\kappa-\eps$. As $(\sigma_n^s)_n$ and $(\sigma_n^r)_n$ converge to the same limit and as $|W|$ is continuous, this leads to a contradiction.
		
		Thus we have a countable number of changes of type of coupling. Using the previous notations, let us denote by $[\sigma_n^s,\sigma_n^r[$ the intervals during which $\Ne(\sigma)=0$ and by $[\sigma_n^r,\sigma_{n+1}^s[$ the intervals during which $\Ne(\sigma)=1$. The same way we denote $t_n^s$ and $t_n^r$ such that $\sigma(t_n^s)=\sigma_n^s$ and $\sigma(t_n^r)=\sigma_n^r$.\\
		As seen before, $(R_t)_t$ is constant on intervals $[t_n^s,t_n^r[$ and acting as a two times Brownian motion with negative drift out of these intervals. As the time needed to exit a bounded open interval for a real Brownian motion with negative drift is a.s. finite, we have $\sum\limits_{n\geq 0}(t_{n+1}^s-t_{n}^r)<+\infty$~{a.s..} Note that: 
		\begin{align*}\int_0^S \Ne(\sigma) e^{2K_{\sigma}}d\sigma&
		=\sum\limits_{n\geq 0}\int_{\sigma_n^r}^{\sigma_{n+1}^s}R^2_t d\sigma=\sum\limits_{n\geq 0}\int_{t_n^r}^{t_{n+1}^s}4dt=4\sum\limits_{n\geq 0}(t_{n+1}^s-t_{n}^r).\end{align*}
		Then, this quantity is a.s. finite and, in order to show that $\tau'<+\infty$, it is enough to show that: \begin{equation*}\int_0^S(1-\Ne)e^{2K_{\sigma}}d\sigma<+\infty.\end{equation*}
		As $(R_t)_t$ and $(K_\sigma)_{\sigma}$ are constant during fixed-distance coupling, we have: \begin{equation*}\int_0^S(1-\Ne)e^{2K_{\sigma}}d\sigma=\sum\limits_{n\geq 0}\int_{\sigma_n^s}^{\sigma_n^r}e^{2K_\sigma}d\sigma=\sum\limits_{n\geq 0}e^{2K_{\sigma_n^s}}(\sigma_n^r-\sigma_n^s).\end{equation*} Thus it is enough to show that $\sum\limits_{n\geq 0}e^{2K_{\sigma_n^s}}(\sigma_n^r-\sigma_n^s)<+\infty$ a.s..
		
		\begin{itemize}
			\item We first show the equality: 
			\begin{equation}\label{esp1}
				\esp\left[\exp\bigg(-\sum\limits_{n\geq 0}e^{2K_{\sigma_n^s}}(\sigma_n^r-\sigma_n^s)\bigg)\big\rvert(K_{\sigma_m^s})_m\right]=\exp\left(-\sum\limits_{n\geq 0}e^{K_{\sigma_n^s}}\times\frac{\frac{\sqrt{k}R_{t_n^s}}{2}}{\sin\left(\frac{\sqrt{k}R_{t_n^s}}{2}\right)}\times 2\sqrt{2}\eps\right).
			\end{equation}
			As our coupling is the fixed-distance one on $[\sigma_n^s,\sigma_n^r]$, there exists $V_\sigma$, a real Brownian motion, such that:
			\begin{equation*}dW_\sigma=\frac{1}{\sqrt{k}R_t}\sin\left(\frac{\sqrt{k}R_t}{2}\right)dV_\sigma=\frac{1}{\sqrt{k}R_{t_n^s}}\sin\left(\frac{\sqrt{k}R_{t_n^s}}{2}\right)dV_\sigma.\end{equation*} 
			In particular $|W_\sigma|$ only depends on $R_{t_n^s}$ for $\sigma_n^r\geq \sigma\geq \sigma_n^s$ and so depends on $K_{\sigma_n^s}$ only.\\
			Then, knowing $\left(K_{\sigma_m^s}\right)_m$, $\sigma_n^r-\sigma_n^s$ is the first hitting time of $k-\eps$ by the process $|W_\sigma|$. As $|W_{\sigma_n^s}|=\kappa$, by continuity, $W_\sigma$ keeps the same sign all along the interval $[\sigma_n^s,\sigma_n^r]$. 
			Then we have:
			\begin{align*}
				\sigma_n^r-\sigma_n^s&=\inf\{\sigma>0 \  |W_{\sigma+\sigma_n^s}=\textrm{sign}(W_{\sigma_n^s})(\kappa-\eps)\}\\
				&=\inf\{\sigma>0 \ |(W_{\sigma+\sigma_n^s}-W_{\sigma_n^s})=\textrm{sign}(W_{\sigma_n^s})(\kappa-\eps-\kappa)\}\\
				&=\inf\left\{\sigma>0 \ \left\rvert\frac{\sqrt{k}R_{t_n^s}}{\sin\left(\frac{\sqrt{k}R_{t_n^s}}{2}\right)}(W_{\sigma+\sigma_n^s}-W_{\sigma_n^s})=-\frac{\sqrt{k}R_{t_n^s}}{\sin\left(\frac{\sqrt{k}R_{t_n^s}}{2}\right)}\textrm{sign}(W_{\sigma_n^s})\eps\right.\right\}.
			\end{align*}
			\\ Moreover, on $[\sigma_n^s,\sigma_n^r]$ and conditional on $K_{\sigma_n^s}$, $\frac{\sqrt{k}R_{t_n^s}}{\sin\left(\frac{\sqrt{k}R_{t_n^s}}{2}\right)}(W_{\sigma+\sigma_n^s}-W_{\sigma_n^s})_\sigma$ is a real Brownian motion starting from $0$.  
			Thus, conditional on $K_{\sigma_n^s}$, $\sigma_n^r-\sigma_n^s$ has the same law than $T_{a_n}$, the first hitting time of $a_n=\frac{\sqrt{k}R_{t_n^s}}{\sin\left(\frac{\sqrt{k}R_{t_n^s}}{2}\right)}\eps$ for a real Brownian motion starting at $0$. Using the Laplace transform we get: \begin{align*}&\esp[\exp(-\mu T_{a_n})]=\exp(-a_n\sqrt{2\mu}) \ \forall \mu>0\\
			\text{ and }&\esp[\exp\left(-\mu(\sigma_n^r-\sigma_n^s)\right)|K_{\sigma_n^s}]=\exp\left(-\frac{\sqrt{k}R_{t_n^s}}{\sin\left(\frac{\sqrt{k}R_{t_n^s}}{2}\right)}\eps\sqrt{2\mu}\right).\end{align*}
			Finally:
			\begin{equation*}\esp\left[\exp\left(-e^{2K_{\sigma_n^s}}(\sigma_n^r-\sigma_n^s)\right)|(K_{\sigma_m^s})_m\right]=\exp\left(-\frac{\sqrt{k}R_{t_n^s}}{\sin\left(\frac{\sqrt{k}R_{t_n^s}}{2}\right)}\eps\sqrt{2}e^{K_{\sigma_n^s}}\right).\end{equation*}
			Furthermore, conditional to $(K_{\sigma_m^s})_m$, $(\sigma_n^r-\sigma_n^s)_n$ are independent. Thus:
			\begin{equation*} 
				\esp\left[\exp\left(-\sum\limits_{n=0}^N e^{2K_{\sigma_n^s}}(\sigma_n^r-\sigma_n^s)\right)\bigg\rvert(K_{\sigma_m^s})_m\right]=\exp\left(-\sum\limits_{n=0}^N\frac{\frac{\sqrt{k}R_{t_n^s}}{2}}{\sin\left(\frac{\sqrt{k}R_{t_n^s}}{2}\right)}2\eps\sqrt{2}e^{K_{\sigma_n^s}}\right).
			\end{equation*}
			Using the dominated convergence theorem we get the announced equality.
			
			\item  We now deal with the quantity  $\displaystyle\sum\limits_{n\geq0} \frac{\frac{\sqrt{k}R_{t_n^s}}{2}}{\sin\left(\frac{\sqrt{k}R_{t_n^s}}{2}\right)}e^{K_{\sigma_n^s}}$ occurring in the previous result.\\
			Let us first notice that $\int_0^{\sigma_n^s}\Ne(\sigma)d\sigma=\sum\limits_{m=0}^{n-1}(\sigma_{m+1}^s-\sigma_m^r)$.
			During reflection times ($[\sigma_m^r,\sigma_{m+1}^s]$), the quantities $\sigma_{m+1}^s-\sigma_m^r$ are the first exit times from the open $]-\kappa,\kappa[$ of the diffusion $W_\sigma$ starting at $\pm(\kappa-\eps)\in]-\kappa,\kappa[$. Note that, unlike the fixed-distance coupling case, here the sign of $W_t$ can change.
			
			During these times we have:
			\begin{equation}\label{equaW}
	\left\lbrace
	\begin{aligned}
				d W_{\sigma(t)}\cdot dW_{\sigma(t)}&=\left(\frac{1}{4}\left(\frac{\tan\left(\frac{\sqrt{k}R_t}{2}\right)}{\frac{\sqrt{k}R_t}{2}}\right)^2+4 W_{\sigma(t)}^2\right)d\sigma(t)\\
				Drift(dW_{\sigma(t)})&= W_{\sigma(t)}\left(\frac{\sqrt{k}R_t}{2} \tan\left(\frac{\sqrt{k}R_t}{2}\right)+3\right) \ d\sigma(t).
			\end{aligned}
			\right.
\end{equation}
			We now take $m$ some positive integer. We define a new time-change: $\zeta_m(\sigma):=\int_0^\sigma d W_{s+\sigma_m^r}\cdot  dW_{s+\sigma_m^r}$.
			As $R_t$ is upper-bounded by $i(M)-\eta$ on $[0,\tau']$, there exists $M$ a positive constant such that $\frac{\tan\left(\frac{\sqrt{k}R_t}{2}\right)}{\frac{\sqrt{k}R_t}{2}}<M$ and $\frac{\sqrt{k}R_t}{2}\tan\left(\frac{\sqrt{k}R_t}{2}\right)<M$. Thus, \begin{equation*} \frac{\sigma}{4}\leq \zeta_m(\sigma)\leq \left(\frac{M^2}{4}+4\kappa ^2\right)\sigma \text{ and } \left|\frac{Drift(dW_{\sigma})}{d\zeta_m(\sigma)}\right|\leq 4\kappa(M+3).
			\end{equation*}
			
			Then there exists a one dimensional Brownian motion $B^m$, starting at $0$ and independent of $\zeta_m$, such that for all $\sigma\in[0,\sigma_{m+1}^s-\sigma_{m}^r]$: \begin{equation*}B_{\zeta_m(\sigma)}^m-4\kappa(3+M)\zeta_m(\sigma)\leq W_{\sigma+\sigma_m^r}-W_{\sigma_m^r}\leq B_{\zeta_m(\sigma)}^m+4\kappa(3+M)\zeta_m(\sigma).
			\end{equation*}
			We now obtain \begin{align*}\zeta_m(\sigma_{m+1}^s-\sigma_m^r)\geq \inf\{\zeta>0  \ |\ &B_{\zeta}^m+4\kappa(3+M)\zeta=\kappa-W_{\sigma_m^r}\}\\
			& \wedge \inf\{\zeta>0  \ |\ B_{\zeta}^m-4\kappa(3+M)\zeta=-\kappa-W_{\sigma_m^r}\}.\end{align*}
			As $W_{\sigma_m^r}$ can only take the two values $\kappa-\eps$ and $-(\kappa-\eps)$, we get $\zeta_m(\sigma_{m+1}^s-\sigma_m^r)\geq T_m$ with: \begin{equation*}T_m:=\inf\{\zeta>0  \ |\ B_{\zeta}^m+4\kappa(3+M)\zeta=\eps\}\wedge \inf\{\zeta>0  \ |\ B_{\zeta}^m-4\kappa(3+M)\zeta=-\eps\}.\end{equation*} In particular, we have: \begin{equation}\label{Tm}
\sigma_{m+1}^s-\sigma_m^r\geq\frac{1}{\frac{M^2}{4}+4\kappa ^2}T_m.
\end{equation} For all $m$, $(T_m)_m$ is a sequence of independent and equally distributed variables with non-negative and finite mean. Then, the strong law of large numbers yields:  \begin{equation*}
				\left(\frac{M^2}{4}+4\kappa ^2\right)\frac{1}{n}\sum\limits_{m=0}^{n-1}(\sigma_{m+1}^s-\sigma_m^r)\geq\frac{1}{n}\sum\limits_{m=0}^{n-1}T_m\xrightarrow[n\rightarrow +\infty]{a.s.}\esp[T_0].
			\end{equation*}
			
			Thus, for $n$ large enough, we get a.s.: \begin{equation}\label{lfgn 2}
				\frac{\int_0^{\sigma_n^s}\Ne(\sigma)d\sigma}{n}=\frac{1}{n}\sum\limits_{m=0}^{n-1}(\sigma_{m+1}^s-\sigma_m^r)\geq \frac{\esp[T_0]}{2(\frac{M^2}{4}+4\kappa ^2)}>0.
			\end{equation}
			Moreover, we obtain  $\int_0^{\sigma_n^s}\Ne(\sigma)d\sigma\xrightarrow[n\rightarrow +\infty]{a.s}+\infty$.\\
			Let us now recall that $K_\sigma=K_0+\int_0^\sigma\Ne(s) dC_s-\int_0^\sigma \Ne\left(\frac{\sqrt{k}R_t \tan\left(\frac{\sqrt{k}R_t}{2}\right)}{4}+\frac{1}{2}\right)ds$. Thus  $K_\sigma-K_0+\int_0^\sigma \Ne\left(\frac{\sqrt{k}R_t \tan\left(\frac{\sqrt{k}R_t}{2}\right)}{4}+\frac{1}{2}\right)ds$ is a Brownian motion for the change of time $\int_0^\sigma\Ne(s)ds$.\\
			
			Note that: \begin{equation*}\frac{1}{2}\leq \frac{\int_0^{\sigma_n^s} \Ne\left(\frac{\sqrt{k}R_t \tan\left(\frac{\sqrt{k}R_t}{2}\right)}{4}+\frac{1}{2}\right)ds}{\int_0^{\sigma_n^s}\Ne(s)ds}.\end{equation*}
			By the strong law of large number for Brownian motions, we also have: \begin{equation*}\frac{K_{\sigma_n^s}-K_0+\int_0^{\sigma_n^s} \Ne\left(\frac{\sqrt{k}R_t \tan\left(\frac{\sqrt{k}R_t}{2}\right)}{4}+\frac{1}{2}\right)ds}{\int_0^{\sigma_n^s}\Ne(s)ds} \xrightarrow[n\rightarrow +\infty]{a.s}0.\end{equation*}
			Finally, a.s. for $n$ large enough, we obtain:
			\begin{equation} \label{lfgn 1}
				\frac{K_{\sigma_n ^s}}{\int_0^{\sigma_n ^s}\Ne(s)ds}\leq-\frac{1}{4}.
			\end{equation}

			By combining the results (\ref{lfgn 2}) and  (\ref{lfgn 1}), we get a.s. for $n$ large enough:
			\begin{equation*}
				\frac{1}{n}K_{\sigma_n^s}=\frac{K_{\sigma_n^s}}{\int_0^{\sigma_n^s}\Ne(s)ds}\times \frac{\int_0^{\sigma_n^s}\Ne(s)ds}{n}\leq -\frac{c_0}{4} <0
			\end{equation*}
			with $c_0:= \frac{\esp[T_0]}{2\left(\frac{M^2}{4}+4\kappa ^2\right)}$.
			It remains to notice that a.s., for all $n$, we have $0<\frac{\sqrt{k}R_{t_n^s}}{2}<\frac{\pi}{2}$. Thus: \begin{equation*}\frac{\frac{\sqrt{k}R_{t_n^s}}{2}}{\sin\left(\frac{\sqrt{k}R_{t_n^s}}{2}\right)}\leq \frac{\pi}{2}\text{ and }\sum\limits_{n\geq M}\frac{\frac{\sqrt{k}R_{t_n^s}}{2}}{\sin\left(\frac{\sqrt{k}R_{t_n^s}}{2}\right)}e^{K_{\sigma_n^s}}\leq\frac{\pi}{2}\sum\limits_{n\geq M}(e^{-\frac{c_0}{4}})^n<+\infty.\end{equation*}
		 Finally we get: \begin{equation}\label{sommefinie}
				\sum\limits_{n\geq 0}\frac{\frac{\sqrt{k}R_{t_n^s}}{2}}{\sin\left(\frac{\sqrt{k}R_{t_n^s}}{2}\right)}e^{K_{\sigma_n^s}}<+\infty \ a.s..
			\end{equation}
			\item Using (\ref{esp1}) and (\ref{sommefinie}), we have $\esp[\exp(-\sum\limits_{n\geq 0}e^{K_{\sigma_n^s}}(\sigma_n^r-\sigma_n^s))|(K_{\sigma_m^s})_m]>0$ a.s..\\
			Thus, still conditional to $(K_{\sigma_m^s})_m$, 
			the event $\exp(-\sum\limits_{n\geq 0}e^{K_{\sigma_n^s}}(\sigma_n^r-\sigma_n^s))> 0$ has a non-zero probability. Equivalently the event $\sum\limits_{n\geq 0}e^{K_{\sigma_n^s}}(\sigma_n^r-\sigma_n^s)<+\infty$ has a non-zero probability. As $\left(e^{2K_{\sigma_n^s}}(\sigma_n^r-\sigma_n^s)\right)_n$ are independent, using the Kolmogorov zero-one law, we get $\pr(\sum\limits_{n\geq 0}e^{K_{\sigma_n^s}}(\sigma_n^r-\sigma_n^s)<+\infty|(K_{\sigma_m^s})_m)=1$ a.s..
			\\ 
			Finally, $\sum\limits_{m\geq 0}e^{2K_{\sigma_m^s}}(\sigma_m^r-\sigma_m^s)<+\infty$ a.s. and so $\tau'<+\infty$ a.s..
		\end{itemize}
		To show that $\pr(\tau>\tau_{\eta})<1$, we just need to remark that this event only depends on the evolution of $R_t$. As $R_t$ is acting as a time-changed Brownian motion with negative drift omitting the times where it stays constant, we directly obtain our result.
	\end{proof}
	We can now give the proof of Theorem \ref{successful}.
	\begin{proof}[Proof of Theorem \ref{thm: couplingAreaInR}]
	To construct a successful coupling, we just need to start the coupling described in Theorem \ref{Kendall}.
	\begin{enumerate}[label=(\roman*)]
		\item If $\tau\wedge \tau_{\eta}=\tau$, we obtain $R_t=A_t=0$.
		\item If $\tau\wedge \tau_{\eta}=\tau_{\eta}$, we use a synchronous coupling until $R_t=R_0$ and then a fixed distance coupling until $A_t=0$ and we re-start the coupling of Theorem \ref{Kendall}.
	\end{enumerate}
	At step (ii), as $R_t$ is deterministic and decreasing during synchronous coupling, each "synchronous step" will take a constant finite time. It also takes an a.s. finite time to obtain $A_t=0$ with fixed distance coupling. Thus we repeat the same experiment independently. As the probability that $\tau<\tau_{\eta}$ is non-zero during the coupling of Theorem \ref{Kendall}, it will take a finite number of change of coupling to be in this event and then have an a.s. finite time of success.
	Let us remind that, if $R_0\in\{0,i(M)\}$, we need to use coupling of independent Brownian motions on any finite deterministic interval of time before our successful coupling to start with $R_t\in]0,i(M)[$. This does not change our result.
	\end{proof}
\begin{NB}\label{NB: compactness}
Note that, using the compactness of $SU(2)$, we can make two improvements for this strategy to define a successful coupling in $SU(2)$. We take $k=1$.
\begin{itemize}
    \item Considering $A_t \mod (4\pi)$ instead of a value in $\mathbb{R}$, we can suppose that $A_t$ stays in $]-2\pi,2\pi]$. The idea is to take $\kappa$ small enough such that 
fixed-distance coupling is stopped when the process $W_{\sigma}$ starting at $\pm \kappa$ hits $\pm(\kappa-\eps)$ with this time a non zero probability to reach one or another of these values. Then there will be less time spent in fixed-distance coupling than in Theorem \ref{Kendall}. To be more precise,  for $n\geq 0$, there exists $(V_{\sigma})_{\sigma}$ a Brownian motion starting at $0$ such that:
\begin{align*}
    \sigma_n^r-\sigma_n^s &=\inf \Bigg\{\sigma \ \Big\rvert \ \sgn(W_{\sigma_n^s})\kappa+\frac{\sin\left(\frac{R_{t_n^s}}{2}\right)}{R_{t_n^s}}V_{\sigma}\in\Big\{\sgn(W_{\sigma_n^s})(\kappa-\eps) \\
    & \ ;\sgn(W_{\sigma_n^s})\left(\frac{4\pi}{R_{t_n^s}^2}-(\kappa-\eps)\right)\Big\}\Bigg\}
    \\
    &=\inf \big\{\sigma \ | \ \sgn(W_{\sigma_n^s})\frac{\sin\left(\frac{R_{t_n^s}}{2}\right)}{R_{t_n^s}}V_{\sigma}\in\{-\eps ; \frac{4\pi}{R_{t_n^s}^2}-2\kappa+\eps\}\big\}.
\end{align*}
 If $\frac{4\pi}{R_{t_n^s}^2}-2\kappa+\eps>0$, which is the case for $\kappa<\frac{2}{\pi}+\frac{\eps}{2}$, then the above stopping time is less than $T_{a_n}$ with $a_n$ defined as before.
This way, instead of (\ref{esp1}), we obtain:
\begin{equation*}
				\esp\left[\exp\left(\left.-\sum\limits_{n\geq 0}e^{2K_{\sigma_n^s}}(\sigma_n^r-\sigma_n^s)\right)\right|(K_{\sigma_m^s})_m\right]>\exp\left(-\sum\limits_{n\geq 0}e^{K_{\sigma_n^s}}\times\frac{\frac{R_{t_n^s}}{2}}{\sin\left(\frac{R_{t_n^s}}{2}\right)}\times 2\sqrt{2}\eps\right).
			\end{equation*}
\item In addition to decreasing the time spent in fixed-distance coupling, having bounded values for $A_t$ prevents from using synchronous coupling when $R_t$ is too close of $i(M)=\pi$. Let us explain this.\\
During reflection coupling, $W_{\sigma}$ stays continuous and still satisfies (\ref{equaW}). Supposing that $R_t\leq \pi-\eta$ for all $t$ in $[t_m^r,t_{m+1}^s]$, we still have (\ref{Tm}).
Moreover, supposing that $R_t>\pi-\eta$ for some $t\in[t_m^r,t_{m+1}^s]$, we get $|W_{\sigma(t)}|=\frac{|A_t|}{R_t^2}\leq \frac{2\pi}{(\pi-\eta)^2}$. Choosing $\eta$, $\kappa$ and $\delta$ such that $\frac{2\pi}{(\pi-\eta)^2}<\kappa-\delta<2\pi$, we obtain $|W_{\sigma(t)}|< \kappa$. This is the case for $\kappa-\delta>\frac{2}{\pi}$. Note that, in order to keep the previous improvement, we take $\delta<\frac{\eps}{2}$. Thus reflection coupling will not end while $R_t$ stays up to $\pi-\eta$. Moreover, there will exist $\tilde{t}_m^r\in]t_m^r,t_{m+1}^s[$ such that $|W_{\sigma(\tilde{t}_m^r)}|= \kappa-\delta$ and $R_t\leq \pi-\eta$ for all $t$ in $[\tilde{t}_m^r,t_{m+1}^s]$. As before, we obtain: \begin{equation*}
\sigma_{m+1}^s-\sigma_m^r>\sigma_{m+1}^s-\sigma(\tilde{t}_m^r)\geq\frac{1}{\frac{M^2}{4}+4\kappa ^2}\tilde{T_m}\end{equation*}
with, $\tilde{T}_m$ defined as $T_m$ but with $\delta$ instead of $\eps$: \begin{equation*}\tilde{T}_m:=\inf\{\zeta>0  \ |\ B_{\zeta}^m+4\kappa(3+M)\zeta=\delta\}\wedge \inf\{\zeta>0  \ |\ B_{\zeta}^m-4\kappa(3+M)\zeta=-\delta\}.\end{equation*}

Thus, using $(\tilde{T}_m)_m$ instead of $(T_m)_m$, we get again (\ref{lfgn 2}). Following the rest of the proof of Theorem \ref{Kendall}, we directly obtain $\tau<+\infty$ a.s..
\end{itemize}
\end{NB}
\begin{appendices}
	\section{Computation of the subLaplacian on SU(2)}\label{SubLapSU(2)}
	The aim of this appendix is to provide a proof for the expression (\ref{SubLapfSU}) of the subLaplacian operator of $SU(2)$ in cylindrical coordinates. Contrary to the other proofs previously mentioned, this proof is only using Lie brackets relations (\ref{LieSU}) and is easy to adapt to $SL(2,\mathbb{R})$ as we will show in \ref{SubLapSL(2)}. \\
	The first step is the computation of the left invariant vectors $\bar{X}$ and $\bar{Y}$ in cylindrical coordinates. In order to do that, we need to express the following expressions in cylindrical coordinates: \begin{align*}&\exp\big(\ph(\cos(\theta)X+\sin(\theta)Y)\big)\exp(zZ)\exp(\eps X)\\
	\text{ and }&\exp\big(\ph(\cos(\theta)X+\sin(\theta)Y)\big)\exp(zZ)\exp(\eps Y).\end{align*} We will then study the derivation of these coordinates in $\eps=0$.
	
	The triplet $(X,Y,Z)$ is a basis of the Lie Algebra $\mathfrak{su}(2)$ satisfying (\ref{LieSU}).
	Thus it can be seen as a sort of direct orthonormal basis for the action of rotation $[\cdot,\cdot]$ which we will denote $\cdot\wedge\cdot$ as the tensor operator, in order to simplify the computations.	In the following lemmas we will deal with some basis $(u,v,w)$ of the Lie algebra satisfying (\ref{LieSU}), that is, such that $u\wedge v=w$, $v\wedge w=u$ and $w\wedge u=v$. We will focus on expressions of the type \begin{equation*}\exp\left(\alpha u+\beta v+\gamma w\right)\exp\left(\epsilon\left(\alpha' u+\beta' v+\gamma' w\right)\right)\end{equation*} for $\alpha, \beta,\gamma,\alpha' ,\beta',\gamma'$ real numbers and $\epsilon$ real near $0$.
	\begin{lemme}\label{orth}
		Let $(u,v,w)$ be a basis satisfying (\ref{LieSU}). Then we get:
		
		\begin{equation}\label{a}\exp\left(\alpha u\right)\exp\left(\epsilon\beta v\right)=\exp\left(\alpha u+\epsilon\frac{\beta\alpha}{2}\left(\cot\left(\frac{\alpha}{2}\right)v+w\right)\right)+\mathcal{O}\left(\epsilon^2\right).\end{equation}
		

		More generally we have: 
		\begin{equation}\label{c}
		\exp(\alpha u)\exp(\beta v)=\exp\left(\beta(\cos\left(\alpha\right)v+\sin\left(\alpha\right)w)\right)\exp\left(\alpha u\right).
		\end{equation}
	\end{lemme}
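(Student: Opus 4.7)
The plan is to prove (\ref{c}) first by a direct adjoint computation, and then (\ref{a}) as a first-order Baker--Campbell--Hausdorff (BCH) expansion that reuses the same underlying calculation.

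The algebraic input for both parts is that the relations (\ref{LieSU}) force $\ad_u$ to act on the plane spanned by $v$ and $w$ as the infinitesimal generator of a rotation: $\ad_u(v)=w$ and $\ad_u(w)=-v$, hence $\ad_u^{2k}(v)=(-1)^k v$ and $\ad_u^{2k+1}(v)=(-1)^k w$. Summing the exponential series yields $e^{\alpha \ad_u}(v)=\cos(\alpha)v+\sin(\alpha)w$. For (\ref{c}) I would then apply the general identity $\exp(A)\exp(B)\exp(-A)=\exp(e^{\ad_A}B)$ with $A=\alpha u$ and $B=\beta v$, and simply move $\exp(\alpha u)$ back to the right to reach the claimed form.

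For (\ref{a}) the starting point is the first-order BCH expansion
\begin{equation*}
\log(e^A e^B)=A+\frac{\ad_A}{1-e^{-\ad_A}}(B)+O(\|B\|^2),
\end{equation*}
applied to $A=\alpha u$ and $B=\epsilon\beta v$, so that the error is genuinely $O(\epsilon^2)$. The remaining task is to evaluate $M:=\frac{\alpha\ad_u}{1-e^{-\alpha\ad_u}}$ at $v$. Since $\ad_u$ acts on the $(v,w)$-plane as a matrix $J$ with $J^2=-I$, we have $e^{-\alpha\ad_u}=\cos(\alpha)I-\sin(\alpha)J$ on this plane, so $1-e^{-\alpha\ad_u}=(1-\cos\alpha)I+\sin(\alpha)J$. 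Inverting this $2\times 2$ block, multiplying by $\alpha J$, and using the half-angle identity $\sin\alpha/(1-\cos\alpha)=\cot(\alpha/2)$ gives
\begin{equation*}
M(v)=\frac{\alpha}{2}\cot\!\left(\frac{\alpha}{2}\right)v+\frac{\alpha}{2}w,
\end{equation*}
which substituted back into the BCH formula yields exactly (\ref{a}).

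The step most likely to cause trouble is the $2\times 2$ inversion together with the half-angle simplification: one must recognize $(1-\cos\alpha)^2+\sin^2\alpha=2(1-\cos\alpha)=4\sin^2(\alpha/2)$ in order for the cotangent to appear. If one prefers not to invoke BCH, an equivalent derivation writes $\exp(\alpha u)\exp(\epsilon\beta v)=\exp(F(\epsilon))$ for a smooth curve $F$ with $F(0)=\alpha u$, then recovers $F'(0)$ from the left-trivialized differential of the exponential map, $d\exp_X=\exp(X)\cdot\frac{1-e^{-\ad_X}}{\ad_X}$; the same computation of $M(v)$ then produces $F'(0)=\frac{\beta\alpha}{2}(\cot(\alpha/2)v+w)$, equivalent to (\ref{a}).
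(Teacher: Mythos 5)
Your proposal is correct and follows essentially the same route as the paper. Both parts rely on the observation that $\ad_u$ is a rotation generator on the $(v,w)$-plane, and for (\ref{a}) your BCH formula $\log(e^A e^B)=A+\frac{\ad_A}{1-e^{-\ad_A}}(B)+O(\|B\|^2)$ is exactly the paper's $\exp(\alpha u)\exp(\epsilon\beta v)=\exp(\alpha u+\epsilon\beta\,\psi(-\ad_{\alpha u})v)+\mathcal{O}(\epsilon^2)$ with $\psi(z)=z/(e^z-1)$; the only difference is that the paper evaluates $\psi(-\alpha\ad_u)v$ by identifying $\ad_u$ with multiplication by $i$ and computing $\psi(i\alpha)=\tfrac{\alpha}{2}(\cot(\alpha/2)-i)$, while you instead invert the real $2\times 2$ block $(1-\cos\alpha)I+\sin(\alpha)J$ — a purely cosmetic variation leading to the same half-angle simplification.
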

	\begin{proof}
		Let us deal with (\ref{a}). Using the Campbell Hausdorff formula we have \begin{equation*}\exp(\alpha u)\exp(\epsilon \beta v)=\exp\left(\alpha u+\eps\beta\psi(-\ad_{\alpha u})(v)\right)+\mathcal{O}(\eps^2)\end{equation*}
		with $\psi(z)=\frac{z}{e^z-1}=\sum\limits_{n\geq 0}\frac{B_n}{n!}z^n$, $B_n$ denoting the Bernoulli coefficients.\\
		We have: $\ad_{(\alpha u)}(v)=\alpha u\wedge v=\alpha w$ and $\ad_{(\alpha u)}^{(2)}(v)=\alpha^2 u\wedge w=-\alpha^2 v$.\\ So we obtain:
		\begin{equation*}\ad_{(\alpha u)}^{(k)}(v)=\left\{
		\begin{array}{ll}
			(-1)^n\alpha^{2n}v & \text{if } k=2n,\ n\geq 0;\\
			(-1)^n\alpha^{2n+1}w & \text{if } k=2n+1,\ n\geq 0.
		\end{array}\right. \end{equation*}
		Thus:
		\begin{align*}\psi\left(-\ad_{ (\alpha u)}\right)&=\sum\limits_{n\geq 0}\frac{B_{2n}}{(2n)!}(-1)^n\alpha^{2n}v-\sum\limits_{n\geq 0}\frac{B_{2n+1}}{(2n+1)!}(-1)^n\alpha^{2n+1}w\\
		&=\textit{Re}\left(\psi(i\alpha)\right)v-\textit{Im}\left(\psi(i\alpha)\right)w.\end{align*}
		To obtain the announced equality, we just need to note that:
		\begin{equation*}
		    \psi(i\alpha)=\frac{i\alpha}{e^{i\alpha}-1}=\frac{i\alpha e^{-i\frac{\alpha}{2}}}{e^{i\frac{\alpha}{2}}-e^{-i\frac{\alpha}{2}}}=\frac{\alpha e^{-i\frac{\alpha}{2}}}{2\sin(\frac{\alpha}{2})}=\frac{\alpha}{2}\left(\cot\left(\frac{\alpha}{2}\right)-i\right).
		\end{equation*}
		Notice that $\exp(A)\exp(B)\exp(-A)=\exp\left(\mbox{Ad}\left(\exp(A)\right)(B)\right)=\exp\left(e^{\ad_ A}(B)\right)$ for any matrix $A$, $B$ with real or complex coefficients. Thus:\begin{align*}\exp(\alpha u)\exp(\beta v)&=\exp(\alpha u)\exp(\beta v)\exp(-\alpha u)\exp(\alpha u)\\
		&=\exp\left(e^{\ad_ {\alpha u}}\left(\beta v\right)\right)\exp(\alpha u).\end{align*}
		We finally obtain (\ref{c}) as: \begin{align*}e^{\ad_ {\alpha u}}(\beta v)&=\sum\limits_{k\geq 0}\frac{1}{k!}\ad_{\alpha u}^{(k)}(\beta v)
			=\beta\left(\sum\limits_{n\geq 0}\frac{\alpha^{2n}}{(2n)!}(-1)^n v+\sum\limits_{n\geq 0}\frac{\alpha^{2n+1}}{(2n+1)!}(-1)^n w\right)\\
			&=\beta\left(\cos(\alpha)v+\sin(\alpha)w\right).\end{align*}
	\end{proof}
	\begin{lemme}\label{north}
		Let $(u,v,w)$ be a basis of our Lie algebra satisfying (\ref{LieSU}) and $t$ such that $u\wedge t=\rho w$; $\rho\neq 0$. Then we get:
		\begin{equation}
			\exp(\alpha u)\exp(\eps t)=\exp\left(\alpha u+\eps \left(t+\rho \left(-v+\frac{\alpha}{2}\cot\left(\frac{\alpha}{2}\right)v+\frac{\alpha}{2}w\right)\right)\right)+\mathcal{O}(\eps^2).
		\end{equation}
	\end{lemme}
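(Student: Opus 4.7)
The plan is to reduce this statement to formula (\ref{a}) of Lemma~\ref{orth}, using the fact that any $t$ satisfying $u\wedge t = \rho w$ decomposes explicitly along $u$ and $v$ alone.

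First I would expand $t$ in the basis $(u,v,w)$: writing $t = au + bv + cw$, the relations $u\wedge v = w$ and $u\wedge w = -v$ give $u\wedge t = bw - cv$, so the hypothesis $u\wedge t = \rho w$ forces $b = \rho$ and $c = 0$. Hence $t = au + \rho v$ for some $a\in\mathbb{R}$. Because $au$ is collinear with $\alpha u$, the two exponentials commute, and applying the Campbell--Hausdorff formula to $\exp(\eps t) = \exp(\eps au + \eps \rho v)$ produces
\begin{equation*}
\exp(\alpha u)\exp(\eps t) = \exp((\alpha + \eps a)\,u)\,\exp(\eps \rho v) + \mathcal{O}(\eps^2),
\end{equation*}
since the first BCH correction is of order $\eps^2[au,\rho v]$.

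Next I would apply formula (\ref{a}) of Lemma~\ref{orth} with the first parameter $\alpha + \eps a$ and with $\beta = \rho$. Any $\eps a$ correction appearing inside $\frac{\rho(\alpha+\eps a)}{2}\cot(\frac{\alpha+\eps a}{2})$ only perturbs a prefactor that already carries a factor $\eps$, so these perturbations are absorbed by the $\mathcal{O}(\eps^2)$ remainder. The exponent thereby simplifies to
\begin{equation*}
\alpha u + \eps\left(au + \frac{\rho\alpha}{2}\cot\left(\frac{\alpha}{2}\right) v + \frac{\rho\alpha}{2} w\right) + \mathcal{O}(\eps^2),
\end{equation*}
and substituting $au = t - \rho v$ rearranges this exactly into $\alpha u + \eps\bigl(t + \rho(-v + \frac{\alpha}{2}\cot(\alpha/2) v + \frac{\alpha}{2} w)\bigr)$, matching the claim.

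The only delicate point is confirming that all corrections produced by replacing $\alpha$ by $\alpha + \eps a$ truly end up in the $\mathcal{O}(\eps^2)$ remainder; this is routine Taylor expansion and I do not anticipate a serious obstacle. As an alternative that avoids this step entirely, one can redo the BCH argument of the proof of (\ref{a}) directly for $t$: by linearity, $\psi(-\ad_{\alpha u})(t) = a\,\psi(-\ad_{\alpha u})(u) + \rho\,\psi(-\ad_{\alpha u})(v)$, with $\psi(-\ad_{\alpha u})(u) = u$ since $\ad_{\alpha u}(u) = 0$, and $\psi(-\ad_{\alpha u})(v) = \frac{\alpha}{2}(\cot(\alpha/2)v + w)$ reused verbatim from Lemma~\ref{orth}, yielding the same conclusion.
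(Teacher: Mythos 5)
Your proof is correct, and the route is slightly different from the paper's, so a comparison is worthwhile. The paper proves the lemma by rerunning the Campbell--Hausdorff/$\psi$ computation for $t$ directly: it lists $\ad_{\alpha u}^{(k)}(t)$ for all $k$ (observing that $k=0$ gives $t$ itself while $k\geq 1$ lands in $\mathrm{span}(v,w)$) and resums the Bernoulli series using the already-computed value $\psi(i\alpha)=\frac{\alpha}{2}(\cot(\alpha/2)-i)$. You instead make the decomposition $t=au+\rho v$ explicit (which is the structural fact the paper exploits implicitly), then work at the level of exponentials: split $\exp(\eps t)=\exp(\eps a u)\exp(\eps\rho v)+\mathcal{O}(\eps^2)$, absorb $\exp(\eps a u)$ into $\exp(\alpha u)$, invoke formula~(\ref{a}) with shifted argument $\alpha+\eps a$, and Taylor-expand. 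This is a valid reduction to the already-proved formula~(\ref{a}), and it trades the series resummation for two small BCH/Taylor bookkeeping steps (the splitting of $\exp(\eps t)$ and the uniformity of the $\mathcal{O}(\eps^2)$ in formula~(\ref{a}) as $\alpha$ is perturbed by $\eps a$), which you correctly flag as routine. Your alternative argument at the end---$\psi(-\ad_{\alpha u})$ is linear, $\psi(-\ad_{\alpha u})(u)=u$, and $\psi(-\ad_{\alpha u})(v)$ was already computed in Lemma~\ref{orth}---is the cleanest version of all three: it reuses Lemma~\ref{orth}'s computation without the exponential-level manipulations, and it is essentially the paper's proof stated via linearity rather than by recomputing the powers $\ad_{\alpha u}^{(k)}(t)$ from scratch.
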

	\begin{proof}
		We use exactly the same strategy as for the previous case using the Campbell Hausdorff formula.
		This time we have: $\ad_{(\alpha u)}(t)=\alpha u\wedge t=\alpha\rho w$; $\ad_{(\alpha u)}^{(2)}(t)=-\alpha^2\rho v$; $\ad_{(\alpha u)}^{(3)}(v)=-\alpha^3\rho w$.\\ Thus we obtain: 
		\begin{equation*}\ad_{(\alpha u)}^{(k)}(t)=\left\{
	    \begin{array}{ll}
			t & \text{if }k=0\\
			(-1)^n\alpha^{2n}\rho v & \text{if } k=2n, n\geq 1 \\
			(-1)^n\alpha^{2n+1}\rho w & \text{if } k=2n+1,\ n\geq 0
		\end{array}\right. \end{equation*}
	and
	\begin{align*}\psi\left(-\ad_{\alpha u}\right)(t)&=t+\rho\left(\text{Re}\left(\psi(i\alpha)\right)-1\right)v-\rho\text{Im}\left(\psi(i\alpha)\right)w\\
	&=t+\rho\left(\frac{\alpha}{2}\cot\left(\frac{\alpha}{2}\right)-1\right)v+\rho\frac{\alpha}{2}w.\end{align*}
	\end{proof}
	\begin{NB}
		Note that the two lemmas stay true if the coefficients $\alpha$ and $\beta$ are complex.
	\end{NB}
	Now we can find the expressions of the invariant vectors on $SU(2)$ in cylindrical coordinates. We take $(\ph,\theta,z)$ some cylindrical coordinates of an element of $SU(2)$. We will denote $x=\ph\cos(\theta)$ and $y=\ph\sin(\theta)$. We look for the cylindrical coordinates of $\exp(xX+yY)\exp(zZ)\exp(\eps(aX+bY))$, with $a$,$b\in\mathbb{R}$ and $\eps$ small.
	\begin{itemize}
		\item First, we use Lemma \ref{orth} with $u=Z$, $v=\frac{aX+bY}{r}$, $\alpha=z$, $\beta=r$ and $r=\sqrt{a^2+b^2}$. Note that $w=\frac{-bX+aY}{r}$. we get:
		\begin{align*}
			\exp(zZ)\exp(\eps(aX+bY))&=\exp(\eps r(\cos(z)v+\sin(z)w))\exp(zZ)\\
			&=\exp(\eps (\cos(z)(aX+bY)+\sin(z)(-bX+aY)))\exp(zZ)\\
			&=\exp(\eps(A(z)X+B(z)Y))\exp(zZ)\\
			\text{ with } A(z)=a\cos(z)-b&\sin(z) \text{ and }B(z)=a \sin(z)+b\cos(z).
		\end{align*}
		If $x=y=0$, we obtain the expected expression in cylindrical coordinates. For what will follow, we will suppose that $(x,y)\neq(0,0)$. We recall that $\ph=\sqrt{x^2+y^2}$.
		\item To obtain the expression of $\exp(xX+yY)\exp(\eps(A(z)X+B(z)Y))$ in cylindrical coordinates, we use Lemma \ref{north} with $u=\frac{xX+yY}{\ph}$, $\alpha=\ph$ and $t=A(z)X+B(z)Y$. We have $u\wedge t=\frac{xB(z)-yA(z)}{\ph}Z$ thus we take $w=Z$ and $\rho=\frac{xB(z)-yA(z)}{\ph}$. Taking $v=w\wedge u=\frac{-yX+xY}{\ph}$, we obtain a basis $(u,v,w)$ satisfying (\ref{LieSU}). We get:
		\begin{align*}
			\exp\left(xX+yY\right)\exp\left(\eps\left(A(z)X+B(z)Y\right)\right)&=\exp\bigg{(}xX+yY+\eps \bigg{(}A(z)X+B(z)Y+\\\rho \bigg{(}-\frac{-yX+xY}{\ph}+\frac{\ph}{2}&\cot\left(\frac{\ph}{2}\right)\frac{-yX+xY}{\ph}+\frac{\ph}{2}Z\bigg{)}\bigg{)}\bigg{)}+\mathcal{O}(\eps^2)\\
			&=\exp\left(\left(x+\eps C\right)X+\left(y+\eps D\right)Y+\eps E\cdot Z\right)
		\end{align*}
		with $C=A(z)+\rho y\left(\frac{1}{\ph}-\frac{1}{2}\cot\left(\frac{\ph}{2}\right)\right)$, 
		$D=B(z)+x\rho\left(-\frac{1}{\ph}+\frac{1}{2}\cot\left(\frac{\ph}{2}\right)\right)$, 
		$E=\rho\frac{\ph}{2}$.
		\item In cylindrical coordinates we can write:
		\begin{equation*}\exp\left(\left(x+\eps C\right)X+\left(y+\eps D\right)Y+\eps E Z\right)=\exp\left(x(\eps) X+y(\eps)Y\right)\exp\left(\eps \gamma(\eps)Z\right).\end{equation*}
		We use the first equality of Lemma \ref{orth} with $u=\frac{x(\eps)X+y(\eps)Y}{\ph(\eps)}$, $\alpha=\ph(\eps)=\sqrt{x(\eps)^2+y(\eps)^2}$, $v=Z$ and $\beta=\gamma(\eps)$. We also have $w=\frac{y(\eps)X-x(\eps)Y}{\ph(\eps)}$. Then we obtain:
		\begin{align*}
			&\exp\left(x(\eps) X+y(\eps)Y\right)\exp\left(\eps \gamma(\eps)Z\right)\\
			&=\exp \bigg{(}x(\eps)X+y(\eps)Y+\eps\frac{\gamma(\eps)\ph(\eps)}{2} \bigg{(}\cot\left(\frac{\ph(\eps)}{2}\right)Z+\frac{y(\eps)X-x(\eps)Y}{\ph(\eps)} \bigg{)}\bigg{)}+\mathcal{O}(\epsilon^2)\\
			&=\exp\bigg{(}\left(x(\eps)+\eps\frac{\gamma(\eps)}{2}y(\eps)\right)X+\left(y(\eps)-\eps\frac{\gamma(\eps)}{2}x(\eps)\right)Y+\eps\frac{\gamma(\eps)\ph(\eps)}{2}\cot\left(\frac{\ph(\eps)}{2}\right)Z\bigg{)}\\&+\mathcal{O}(\epsilon^2).
		\end{align*}
		Thus we take $x(\eps), y(\eps)$ and $\gamma(\eps)$ such that:
		\begin{subequations}
  \begin{empheq}[left=\empheqlbrace]{align}
    x+\eps C &=x(\eps)+\eps\frac{\gamma(\eps)}{2}y(\eps)\label{eq a} \\ 
    y+\eps D &=y(\eps)-\eps\frac{\gamma(\eps)}{2}x(\eps)\label{eq b} \\
    E &=\frac{\ph(\eps)\gamma(\eps)}{2}\cot(\frac{\ph(\eps)}{2})\label{eq c}
  \end{empheq}
\end{subequations}
		Note that $x(0)=x$, $y(0)=y$ and $\ph(0)=\ph$.
		Finally we get: \begin{align*}\exp\left(xX+yY  \right)\exp\left(zZ\right)&\exp\left(\eps(aX+bY)\right)\\
		&=\exp\left(x(\eps) X+y(\eps)Y\right)\exp\left(\eps \gamma(\eps)Z\right)\exp\left(zZ\right)+\mathcal{O}(\eps^2)\\
			&=\exp\left(x(\eps) X+y(\eps)Y\right)\exp\left(z(\eps)Z\right)+\mathcal{O}(\eps^2)\\
			&\text{ with } z(\eps)=z+\eps \gamma(\eps)\\
			&=\exp\left(\ph(\eps)(\cos(\theta(\eps)) X+\sin(\theta(\eps))Y\right)\exp\left(z(\eps)Z\right)+\mathcal{O}(\eps^2).\end{align*}
		\item To obtain the values of the left invariant vector, we now have to study $\theta'(0)$, $\ph'(0)$ and $z'(0)$.\\
		Using (\ref{eq c}), we directly have $z'(0)=\gamma(0)=\frac{2}{\ph}\tan\left(\frac{\ph}{2}\right)E=\rho \times \tan\left(\frac{\ph}{2}\right)$ with \begin{align*} \rho&=\frac{xB(z)-yA(z)}{\ph}=\cos(\theta)B(z)-\sin(\theta)A(z)\\
				&=\cos(\theta)\left(a \sin(z)+b\cos(z)\right)-\sin(\theta)\left(a\cos(z)-b\sin(z)\right)\\
				&=a\left(\cos(\theta)\sin(z)-\sin(\theta)\cos(z)\right)+b\left(\cos(\theta)\cos(z)+\sin(\theta)\sin(z)\right)\\
				&=a\sin(z-\theta)+b\cos(z-\theta).
			\end{align*}
			Thus $z'(0)=\tan\left(\frac{\ph}{2}\right)\left(a\sin(z-\theta)+b\cos(z-\theta)\right)$.\\
		Then, summing the squares of (\ref{eq a}) and (\ref{eq b}), we get $\ph(\eps)^2=x(\eps)^2+y(\eps)^2=x^2+y^2+2\eps\left(xC+yD\right)+\mathcal{O}(\eps^2)$, hence $2\ph(0)\ph'(0)=2\left(xC+yD\right)$ and \begin{align*}
				\ph'(0)&=\frac{xC+yD}{\ph}=\cos(\theta)C+\sin(\theta)D\\
				&=\cos(\theta)\left(A(z)+\rho y\left(\frac{1}{\ph}-\frac{1}{2}\cot\left(\frac{\ph}{2}\right)\right)\right)\\
				&+\sin(\theta)\left(B(z)+x\rho\left(-\frac{1}{\ph}+\frac{1}{2}\cot\left(\frac{\ph}{2}\right)\right)\right)\\
				&=\cos(\theta)A(z)+\sin(\theta)B(z)+\rho \left(\frac{1}{\ph}-\frac{1}{2}\cot\left(\frac{\ph}{2}\right)\right)\left(y\cos(\theta)-x\sin(\theta)\right)\\
				&=\cos(\theta)A(z)+\sin(\theta)B(z)\\
				&=\cos(\theta)\left(a\cos(z)-b\sin(z)\right)+\sin(\theta)\left(a \sin(z)+b\cos(z)\right)\\
				&=a\cos(\theta-z)+b\sin(\theta-z).
			\end{align*}
			
		Now note that $\tan\left(\theta(\eps)\right)=\frac{y(\eps)}{x(\eps)}$.\\
			Thus $\frac{\theta'(\eps)}{\cos^2(\theta(\eps))}=\frac{y'(\eps)x(\eps)-x'(\eps)y(\eps)}{x(\eps)^2}$ and $\theta'(0)=\cos^2(\theta)\frac{y'(0)x-x'(0)y}{r^2 \cos^2(\theta)}=\frac{y'(0)x-x'(0)y}{r^2}$.\\ Using (\ref{eq a}) and then (\ref{eq b}), we get: \begin{align*}x'(0)=C-\frac{\gamma(0)}{2}y(0)=C-\frac{\rho}{2} \tan(\frac{\ph}{2})y\\  y'(0)=D+\frac{\gamma(0)}{2}x(0)=D+\frac{\rho}{2} \tan(\frac{\ph}{2})x.
			\end{align*}
			We obtain: \begin{align*}\theta'(0)
				&=\frac{\left(D+\frac{\rho}{2} \tan\left(\frac{\ph}{2}\right)x\right)x-\left(C-\frac{\rho}{2} \tan\left(\frac{\ph}{2}\right)y\right)y}{\ph^2}=\frac{Dx-Cy}{\ph^2}+\frac{\rho}{2} \tan\left(\frac{\ph}{2}\right)\\
				&=\frac{D\cos(\theta)-C\sin(\theta)}{\ph}+\frac{\rho}{2} \tan\left(\frac{\ph}{2}\right).
			\end{align*}
			We have: \begin{align*}&D\cos(\theta)-C\sin(\theta)\\&=\cos(\theta)\left(B(z)+x\rho\left(-\frac{1}{\ph}+\frac{1}{2}\cot\left(\frac{\ph}{2}\right)\right)\right)-\sin(\theta)\left(A(z)+\rho y\left(\frac{1}{\ph}-\frac{1}{2}\cot\left(\frac{\ph}{2}\right)\right)\right)\\
				&=\cos(\theta)B(z)-\sin(\theta)A(z)+\rho\left(-\frac{1}{\ph}+\frac{1}{2}\cot\left(\frac{\ph}{2}\right)\right)\left(x\cos(\theta)+y\sin(\theta)\right)\\
				&=\rho+\ph\rho\left(-\frac{1}{\ph}+\frac{1}{2}\cot\left(\frac{\ph}{2}\right)\right)\\
				&=\frac{\ph\rho}{2}\cot\left(\frac{\ph}{2}\right).
			\end{align*}
			Hence: $\theta'(0)=\frac{\rho}{2}\left(\cot\left(\frac{\ph}{2}\right)+\tan\left(\frac{\ph}{2}\right)\right)=\frac{a\sin(z-\theta)+b\cos(z-\theta)}{2}\left(\cot\left(\frac{\ph}{2}\right)+\tan\left(\frac{\ph}{2}\right)\right).$
		\item Finally, taking $(a,b)=(1,0)$, we get $\bar{X}=\begin{pmatrix} \cos(\theta-z) \\\frac{1}{2}\sin(z-\theta)\left(\cot\left(\frac{\ph}{2}\right)+\tan\left(\frac{\ph}{2}\right)\right) \\\tan\left(\frac{\ph}{2}\right)\sin(z-\theta) \end{pmatrix}$ and, taking $(a,b)=(0,1)$, we get $\bar{Y}=\begin{pmatrix} \sin(\theta-z) \\\frac{1}{2}\cos(\theta-z)\left(\cot\left(\frac{\ph}{2}\right)+\tan\left(\frac{\ph}{2}\right)\right) \\\tan\left(\frac{\ph}{2}\right)\cos(z-\theta) \end{pmatrix}$
		.\end{itemize}
	We can now calculate the subLaplacian operator $L=\frac{1}{2}\left(\bar{X}^2+\bar{Y}^2\right)$ (also made in~\cite{bonnefont-these}).
	\begin{align*}
	\bar{X}^2=\cos(\theta-z)\partial_\ph&\left[\cos(\theta-z)\partial_\ph+\frac{1}{2}\sin(z-\theta)\left(\cot\left(\frac{\ph}{2}\right)+\tan\left(\frac{\ph}{2}\right)\right)\partial_{\theta}\right.
	\\&+\left.\tan\left(\frac{\ph}{2}\right)\sin(z-\theta)\partial_z\right] \\
		+\frac{1}{2}\sin(z-\theta)&\left(\cot\left(\frac{\ph}{2}\right)+\tan\left(\frac{\ph}{2}\right)\right)\partial_{\theta}\bigg[\cos(\theta-z)\partial_\ph
		+\frac{1}{2}\sin(z-\theta)\left(\cot\left(\frac{\ph}{2}\right)\right.\\
		&\left.+\tan\left(\frac{\ph}{2}\right)\right)\partial_{\theta}+\tan\left(\frac{\ph}{2}\right)\sin(z-\theta)\partial_z\bigg]\\
		+\tan\left(\frac{\ph}{2}\right)\sin&(z-\theta)\partial_z\left[\cos(\theta-z)\partial_\ph+\frac{1}{2}\sin(z-\theta)\left(\cot\left(\frac{\ph}{2}\right)+\tan\left(\frac{\ph}{2}\right)\right)\partial_{\theta}\right.\\
		&\left.+\tan\left(\frac{\ph}{2}\right)\sin(z-\theta)\partial_z\right]
		\end{align*}
		and
		\begin{align*}
		\bar{Y}^2=\sin(\theta-z)\partial_\ph&\left[\sin(\theta-z)\partial_{\ph}+\frac{1}{2}\cos(\theta-z)\left(\cot\left(\frac{\ph}{2}\right)+\tan\left(\frac{\ph}{2}\right)\right)\partial_{\theta}+\right.\\
		&+\left.\tan\left(\frac{\ph}{2}\right)\cos(z-\theta)\partial_z\right]\\
		+\frac{1}{2}\cos(\theta-z)&\left(\cot\left(\frac{\ph}{2}\right)+\tan\left(\frac{\ph}{2}\right)\right)\partial_{\theta}\left[\sin(\theta-z)\partial_{\ph}+\frac{1}{2}\cos(\theta-z)\left(\cot\left(\frac{\ph}{2}\right)\right.\right.\\
		&\left.+\tan\left(\frac{\ph}{2}\right)\right)\partial_{\theta}+\tan\left(\frac{\ph}{2}\right)\cos(z-\theta)\partial_z\bigg]\\
		+\tan\left(\frac{\ph}{2}\right)\cos&(z-\theta)\partial_z\left[\sin(\theta-z)\partial_{\ph}+\frac{1}{2}\cos(\theta-z)\left(\cot\left(\frac{\ph}{2}\right)+\tan\left(\frac{\ph}{2}\right)\right)\partial_{\theta}\right.\\
		&+\tan\left(\frac{\ph}{2}\right)\cos(z-\theta)\partial_z\bigg].
	\end{align*}
	We obtain: \begin{align*}
		\bar{X}^2+\bar{Y}^2&=\partial^2_{\ph,\ph}+\frac{1}{4}\left(\cot\left(\frac{\ph}{2}\right)+\tan\left(\frac{\ph}{2}\right)\right)^2\partial^2_{\theta,\theta}+\left(1+\tan^2\left(\frac{\ph}{2}\right)\right)\partial^2_{\theta,z}\\
		&+\frac{1}{2}\left(\cot\left(\frac{\ph}{2}\right)-\tan\left(\frac{\ph}{ 2}\right)\right)\partial_{\ph}+\tan^2\left(\frac{\ph}{2}\right)\partial^2_{z,z}\\
		&=\partial^2_{\ph,\ph}+\frac{1}{\sin^2(\ph)}\partial^2_{\theta,\theta}+\tan^2\left(\frac{\ph}{2}\right)\partial^2_{z,z}+\frac{1}{\cos^2\left(\frac{\ph}{2}\right)}\partial^2_{\theta,z}+\cot(\ph)\partial_{\ph}.
	\end{align*}
	\section{Computation of the subLaplacian on {$SL(2,\mathbb{R})$}}\label{SubLapSL(2)}
	We now consider the case of $SL(2,\mathbb{R})$. In this case, $(X,Y,Z)$ is satisfying (\ref{LieSL}). In order to use our previous results, we define $(\tilde{X},\tilde{Y},\tilde{Z}):=(iX,iY,-Z)$. This way we get $(\tilde{X},\tilde{Y},\tilde{Z})$ a basis of $\mathfrak{su(2)}$ satisfying (\ref{LieSU}). We have to study \begin{align*}
		\exp\left(xX+yY\right)\exp\left(zZ\right)&\exp\left(\eps\left(aX+bY\right)\right)\\&=\exp\left(-ix\tilde{X}-iy\tilde{Y}\right)\exp\left(-z\tilde{Z}\right)\exp\left(\eps\left(-ia\tilde{X}-ib\tilde{Y}\right)\right).\end{align*}
	In the previous part, as said in the note concerning the two lemmas, the computation can be made using complex coefficients. Then we obtain:
	\begin{align*}
		\exp(xX+yY)\exp(zZ)&\exp(\eps(aX+bY))\\
		&=\exp\left(\ph(\eps)\left(\cos\left(\theta(\eps)\right) \tilde{X}+\sin\left(\theta(\eps)\right)\tilde{Y}\right)\right)\exp\left(z(\eps)\tilde{Z}\right)+\mathcal{O}(\eps^2)\\
		&=\exp\left(i\ph(\eps)\left(\cos\left(\theta(\eps)\right) \tilde{X}+\sin\left(\theta(\eps)\right)\tilde{Y}\right)\right)\exp\left(-z(\eps)\tilde{Z}\right)+\mathcal{O}(\eps^2),\end{align*} with $\ph(0)=-i\ph$, $\theta(0)=\theta$ and $z(0)=-z$.
	
	We get: \begin{equation*}
		\frac{d}{d\eps}\left(i\ph(\eps)\right)_{|\eps=0}=i\ph'(0)=i\left(-ia\cos(\theta-(-z))-ib\sin(\theta-(-z))\right)\\
		=a\cos(\theta+z)+b\sin(\theta+z);\end{equation*}
	\begin{align*}
		\frac{d}{d\eps}\left(-z(\eps)\right)_{|\eps=0}&=-z'(0)=-\tan\left(\frac{-i\ph}{2}\right)\left(-ia\sin(-z-\theta)-ib\cos(-z-\theta)\right)\\
		&=-\tanh\left(\frac{\ph}{2}\right)\left(a\sin(z+\theta)-b\cos(\theta+z)\right);\end{align*}
	\begin{align*}
		\theta'(0)&=\frac{-ia\sin(-z-\theta)-ib\cos(\theta-(-z))}{2}\left(\cot\left(\frac{-i\ph}{2}\right)+\tan\left(\frac{-i\ph}{2}\right)\right)\\
		&=-i\frac{-a\sin(z+\theta)+b\cos(\theta+z)}{2}\left(i\coth\left(\frac{\ph}{2}\right)-i\tanh\left(\frac{\ph}{2}\right)\right)\\
		&=\frac{-a\sin(z+\theta)+b\cos(\theta+z)}{2}\left(\coth\left(\frac{\ph}{2}\right)-\tanh\left(\frac{\ph}{2}\right)\right).
	\end{align*}
	Thus we obtain: \begin{equation*}\bar{X}=\begin{pmatrix} \cos(\theta+z) \\-\frac{\sin(z+\theta)}{2}\left(\coth\left(\frac{\ph}{2}\right)-\tanh\left(\frac{\ph}{2}\right)\right) \\-\tanh\left(\frac{\ph}{2}\right)\sin(z+\theta) \end{pmatrix}\text{ 
	and }\bar{Y}=\begin{pmatrix} \sin(\theta+z) \\\frac{\cos(\theta+z)}{2}\left(\coth\left(\frac{\ph}{2}\right)-\tanh\left(\frac{\ph}{2}\right)\right) \\\tanh\left(\frac{\ph}{2}\right)\cos(z+\theta) \end{pmatrix}.\end{equation*}
	
	As before, we can calculate the subLaplacian operator  \begin{align*}
		L=\frac{1}{2}(\bar{X}^2+\bar{Y}^2)&=\partial^2_{\ph,\ph}+\frac{1}{4}\left(\coth\left(\frac{\ph}{2}\right)-\tanh\left(\frac{\ph}{2}\right)\right)^2\partial^2_{\theta,\theta}+\left(1-\tanh^2\left(\frac{\ph}{2}\right)\right)\partial^2_{\theta,z}\\
		&+\frac{1}{2}\left(\coth\left(\frac{\ph}{2}\right)+\tanh\left(\frac{\ph}{2}\right)\right)\partial_{\ph}+\tanh^2\left(\frac{\ph}{2}\right)\partial^2_{z,z}\\
		&=\partial^2_{\ph,\ph}+\frac{1}{\sinh^2(\ph)}\partial^2_{\theta,\theta}+\tanh^2\left(\frac{\ph}{2}\right)\partial^2_{z,z}+\frac{1}{\cosh^2\left(\frac{\ph}{2}\right)}\partial^2_{\theta,z}+\coth(\ph)\partial_{\ph}.
	\end{align*}
		\section{Proof of Proposition \ref{prop: distance ponctuelle} for {$SL(2,\mathbb{R})$}}\label{subsec: InterpretationDistanceSL}
	The proof is quite similar as for the case of $SU(2)$.
\begin{proof}
	We use the same notations as before.
	Let $g,g'\in SL(2,\mathbb{R})$. This time, the matrix computations give:
	\begin{align*}
		g^{-1}\cdot g'&
		=\exp\big(-\ph(\cos(\theta+z)X+\sin(\theta+z)Y)\big)\\
		&\exp\big(\ph'(\cos(\theta'+z)X+\sin(\theta'+z)Y)\big)\exp\big((z'-z)Z\big)
	\end{align*}
	If we denote $M:=\exp\big(\rho(\cos(\Theta)X+\sin(\Theta)Y)\big)\exp(\zeta Z)$, we have:\begin{equation*}M=\begin{pmatrix}
			\cosh\left(\frac{\rho}{2}\right)\cos\left(\frac{\zeta}{2}\right)+\sinh\left(\frac{\rho}{2}\right)\cos\left(\Theta+\frac{\zeta}{2}\right) &-\cosh\left(\frac{\rho}{2}\right)\sin\left(\frac{\zeta}{2}\right)-\sinh\left(\frac{\rho}{2}\right)\sin\left(\Theta+\frac{\zeta}{2}\right)\\[6pt]
			\cosh\left(\frac{\rho}{2}\right)\sin\left(\frac{\zeta}{2}\right)-\sinh\left(\frac{\rho}{2}\right)\sin\left(\Theta+\frac{\zeta}{2}\right) & \cosh\left(\frac{\rho}{2}\right)\cos\left(\frac{\zeta}{2}\right)-\sinh\left(\frac{\rho}{2}\right)\cos\left(\Theta+\frac{\zeta}{2}\right)
	\end{pmatrix}\end{equation*} The matrix $M$ is also equal to the product: \begin{equation*}\exp\big(-\ph(\cos(\theta+z)X+\sin(\theta+z)Y)\big)\exp\big(\ph'(\cos(\theta'+z)X+\sin(\theta'+z)Y)\big).\end{equation*} In particular we get:
	\begin{align*}
		M_{1,1}&=\cosh\left(\frac{\ph}{2}\right)\cosh\left(\frac{\ph'}{2}\right)+\cosh\left(\frac{\ph}{2}\right)\sinh\left(\frac{\ph'}{2}\right)\cos(\theta'+z)\\
		&-\cosh\left(\frac{\ph'}{2}\right)\sinh\left(\frac{\ph}{2}\right)\cos(\theta+z)-\sinh\left(\frac{\ph}{2}\right)\sinh\left(\frac{\ph'}{2}\right)\cos(\theta'-\theta)\\
		M_{1,2}&=-\cosh\left(\frac{\ph}{2}\right)\sinh\left(\frac{\ph'}{2}\right)\sin(\theta'+z)+\sinh\left(\frac{\ph}{2}\right)\cosh\left(\frac{\ph'}{2}\right)\sin(\theta+z)\\
		&+\sinh\left(\frac{\ph}{2}\right)\sinh\left(\frac{\ph'}{2}\right)\sin(\theta'-\theta)\\
		M_{2,1}&=-\cosh\left(\frac{\ph}{2}\right)\sinh\left(\frac{\ph'}{2}\right)\sin(\theta'+z)+\sinh\left(\frac{\ph}{2}\right)\cosh\left(\frac{\ph'}{2}\right)\sin(\theta+z)\\
		&-\sinh\left(\frac{\ph}{2}\right)\sinh\left(\frac{\ph'}{2}\right)\sin(\theta'-\theta)\\
		M_{2,2}&=\cosh\left(\frac{\ph}{2}\right)\cosh\left(\frac{\ph'}{2}\right)-\cosh\left(\frac{\ph}{2}\right)\sinh\left(\frac{\ph'}{2}\right)\cos(\theta'+z)\\
		&+\cosh\left(\frac{\ph'}{2}\right)\sinh\left(\frac{\ph}{2}\right)\cos(\theta+z)-\sinh\left(\frac{\ph}{2}\right)\sinh\left(\frac{\ph'}{2}\right)\cos(\theta'-\theta)
	\end{align*}
	As $\cosh\left(\frac{\rho}{2}\right)\cos\left(\frac{\zeta}{2}\right)=\frac{M_{1,1}+M_{2,2}}{2}$ and $\cosh\left(\frac{\rho}{2}\right)\sin\left(\frac{\zeta}{2}\right)=\frac{M_{2,1}-M_{1,2}}{2}$, we get:\begin{subnumcases}{}
		\cosh\left(\frac{\rho}{2}\right)\cos\left(\frac{\zeta}{2}\right)=
		\cosh\left(\frac{\ph}{2}\right)\cosh\left(\frac{\ph'}{2}\right)-\sinh\left(\frac{\ph}{2}\right)\sinh\left(\frac{\ph'}{2}\right)\cos(\theta'-\theta) \\
		\cosh\left(\frac{\rho}{2}\right)\sin\left(\frac{\zeta}{2}\right)=
		-\sinh\left(\frac{\ph}{2}\right)\sinh\left(\frac{\ph'}{2}\right)\sin(\theta'-\theta).
	\end{subnumcases}

	\begin{itemize}
		\item This time, the distance between $x:=\Pi_2(g)$ and $y:=\Pi_2(g')$ satisfy:
		\begin{align*}
			\cosh(\rho(x,y))=\cosh(\ph)\cosh(\ph')+\sinh(\ph)\sinh(\ph')\cos(\theta-\theta').
		\end{align*}
		Using the identity $\cosh^2(\frac{\rho}{2})=\frac{1+\cosh(\rho)}{2}$ we obtain: \begin{equation}\label{DistSL}
			\cosh(\rho)=\cosh(\ph)\cosh(\ph')+\sinh(\ph)\sinh(\ph')\cos(\theta-\theta')
		\end{equation} and thus: $\rho=\rho(x,y)$.
		\item Using the same notations as before (this time $N_0$ is the pole used to define polar coordinates on $\mathbf{H}$) and 
		the equivalent of the Heron formula for hyperbolic triangles, we have: \begin{align*}\cos\left(\frac{\mathcal{A}_{y,x,N_0}}{2}\right)&=\frac{1}{4\cosh\left(\frac{\rho}{2}\right)\cosh\left(\frac{\ph}{2}\right)\cosh\left(\frac{\ph'}{2}\right)}(1+\cosh(\ph)+\cosh(\ph')+\cosh(\rho))\\
		&=\cos\left(\frac{\zeta}{2}\right)\end{align*} and thus $\zeta=\textrm{sign}(\zeta) \mathcal{A}_{y,x,N_0}$. Moreover, we have  $\sin\left(\frac{\zeta}{2}\right)=\sin(\theta-\theta')\frac{\sinh\left(\frac{\ph}{2}\right)\sinh\left(\frac{\ph'}{2}\right)}{\cosh\left(\frac{\rho}{2}\right)}$, and so $\zeta<0$ if and only if $\theta<\theta'$.
	\end{itemize}
\end{proof}
\end{appendices}
\bibliographystyle{plain} 

\bibliography{Bibliographie}

\end{document}